\documentclass[12pt,reqno,twoside]{amsart}
\usepackage{amsthm,amsmath,amssymb,mathrsfs,amsbsy}
\usepackage{graphicx,epsfig,wrapfig,sidecap,subcaption}
\usepackage[all]{xy}
\usepackage{proof}
\usepackage[usenames]{color}
\usepackage{hyperref}
\usepackage{bm}
\usepackage[capitalize]{cleveref}
\usepackage{stmaryrd}
\usepackage{tikz}
\usepackage{mathdots}

\xyoption{dvips}
\xyoption{color}
\usepackage{a4wide} 	

\usepackage{color}

\title[Spiral Delone Sets]{Higher Dimensional Spiral Delone Sets}
\author{Faustin Adiceam and Ioannis Tsokanos}
\date{}

\newcommand{\N}{{\mathbb{N}}}
\newcommand{\Z}{{\mathbb{Z}}}

\newcommand{\R}{{\mathbb{R}}}

\newcommand{\T}{{\mathbb{T}}}

\newcommand{\Sph}{\mathbb{S}}
\newcommand{\K}{\mathbb{K}}
\newcommand{\dsph}{d_{\mathbb{S}^d}}
\newcommand{\dsphb}{d_{\mathbb{S}^2}}

\theoremstyle{plain}
\newtheorem{thm}{Theorem}[section]

\newtheorem{prop}[thm]{Proposition}

\theoremstyle{definition}

\numberwithin{equation}{section}
\swapnumbers

\usepackage{color}

\usepackage{tikz-3dplot}
\tdplotsetmaincoords{80}{110}

\begin{document}

\begin{abstract}
A Delone set in $\R^n$ is a set such that (a) the distance between any two of its points is uniformly bounded below by a strictly positive constant and such that (b) the distance from any point in $\R^n$ to the set is uniformly bounded above. 
Delone sets are thus sets of points enjoying nice spacing properties, and appear therefore naturally in mathematical models for quasicrystals.

Define a spiral set in $\R^n$ as a set of points of the form $\left\{\sqrt[n]{k}\cdot\bm{u}_k\right\}_{k\ge 1}$, where $\left(\bm{u}_k\right)_{k\ge 1}$ is a sequence in the unit sphere $\Sph^{n-1}$. 
In the planar case $n=2$, spiral sets serve as natural theoretical models in phyllotaxis (the study of  configurations of leaves on a plant stem), and an important example in this class includes the sunflower spiral.

Recent works by  Akiyama, Marklof and Yudin provide a reasonable complete characterisation of planar spiral sets which are also Delone. A related problem that has emerged in several places in the literature over the past fews years is to determine whether this theory can be extended to higher dimensions, and in particular to show the existence of spiral Delone sets in any dimension.

This paper addresses this question by characterising the Delone property of a spiral set in terms of packing and covering conditions satisfied by the spherical sequence $\left(\bm{u}_k\right)_{k\ge 1}$. This allows for the construction of explicit examples of spiral Delone sets in $\R^n$ for all $n\ge 2$, which boils down to finding a sequence of points in $\Sph^{n-1}$ enjoying some optimal  distribution properties.
\end{abstract}

\maketitle

\begin{center}
\emph{A Monsieur Fran\c{c}ois Nebout.}
\end{center}



\section{Introduction}\label{sec:introduction} 

Throughout, the integers $n\ge 2$ and $d\ge 1$ denote two dimensions related as follows~: 
\begin{equation}\label{elem-1}
n=d+1.
\end{equation}
In particular, the unit sphere in $\R^n$ will be denoted by $\Sph^d$. A set $\mathfrak{B}$ in $\R^n$ is said to be \emph{uniformly discrete} if $$\inf_{\underset{\bm{x}\neq \bm{y}}{\bm{x}, \bm{y}\in\mathfrak{B}}}\; \left\|\bm{x}-\bm{y}\right\|_2\;>\;0;$$ that is, if the distance between two distinct points in $\mathfrak{B}$ is uniformly bounded below by a strictly positive constant. It is \emph{relatively dense} if $$\sup_{\bm{x}\in\R^n}\; \inf_{\bm{y}\in\mathfrak{B}} \; \left\|\bm{x}-\bm{y}\right\|_2\;<\;\infty;$$  that is, if the distance between a point 
and its complement in $\mathfrak{B}$ is uniformly bounded above. The set $\mathfrak{B}$ is said to be a \emph{Delone set} if it is both uniformly discrete and relatively dense. A \emph{spiral set} in $\R^n$ is a set of points of the form $$\mathfrak{S}=\left\{f(k)\bm{u}_k\right\}_{k\ge 1},$$ where $f~: \R_+\rightarrow \R_+$ is a strictly increasing sequence, and where $\left(\bm{u}_k\right)_{k\ge 1}$ is a sequence in $\Sph^d$.\\

The goal of  this paper is to relate the Delone property and spiral sets. This problem is reasonable well--understood in dimension $n=2$, where interests in spiral sets are partly motivated by the fact that they can be used as models for phyllotaxis configurations (i.e.~for configurations of leaves on a plant stem). In this case, it follows from the Lie group structure of the abelian group $\Sph^1$ that the spherical sequence $\left(\bm{u}_k\right)_{k\ge 1}$ can be decomposed as $\bm{u}_k=e\!\left(x_k\right)$, where $e(x)=\exp(2i\pi x)$ for all $x\in\R$, and where $\left(x_k\right)_{k\ge 1}$ is a real sequence.  Akiyama~\cite[Lemma~1]{akiyama} showed that a necessary condition for a planar spiral set $\mathfrak{S}$ to be Delone is that $$0\;<\; \liminf_{k\rightarrow\infty}\;\frac{f(k)}{\sqrt{k}} \;\le\; \limsup_{k\rightarrow\infty}\;\frac{f(k)}{\sqrt{k}}\;<\;\infty.$$ This suggests that a very natural choice for the function $f$ is $f(x)=\sqrt{x}$. His proof can be generalised in an obvious way to any dimension so that a natural choice for $f$ in dimension $n\ge 2$ is $f(x)=\sqrt[n]{x}$.

Akiyama applied his result to study  Fermat's spiral  $\mathfrak{S}_{\alpha}=\left\{\sqrt{k}\cdot e(k\alpha)\right\}_{k\ge 1}$, where $\alpha\in\R$ (this is also known as the sunflower spiral). With the help of the Three Distance Theorem, he proved in~\cite[Theorem~3]{akiyama} that $\mathfrak{S}_{\alpha}$ is Delone if, and only if, $\alpha$ is badly approximable number; that is, if, and only if, $$\inf_{\underset{(p,q)\neq(0,0)}{(p,q)\in\Z^2}}\left\{q\left|q\alpha-p\right|\right\}\;>\;0.$$ Akiyama thus rediscovered a result already noticed by Yudin~\cite{yudin}, who used analytic tools to establish the sufficiency of the property of bad approximability in this equivalence.  Of particular interest in considerations related to phyllotaxis  is the case when the badly approximable number $\alpha$ is the Golden Ratio $\varphi=(1+\sqrt{5})/2$. A portion of the resulting spiral Delone set $\mathfrak{S}_{\varphi}$ is represented in Figure~\ref{sunflower}.\\

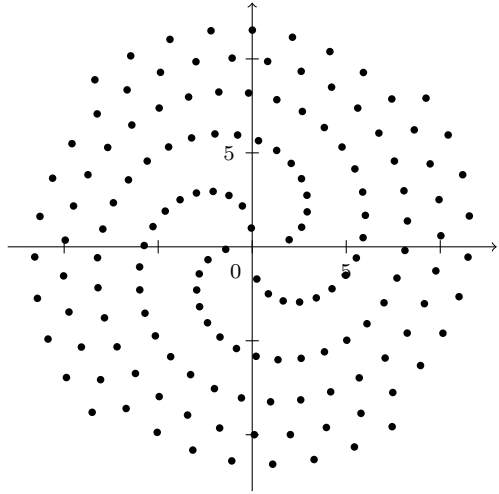
\begin{figure}[h!]
\centering
\begin{tikzpicture}[scale=0.25]
\fill (-0.047220096, 0.998884509) circle (0.2) ;
\fill (-1.407906912, -0.133409618) circle (0.2) ;
\fill (0.244633358, -1.714687878) circle (0.2) ;
\fill (1.964403748, 0.375656646) circle (0.2) ;
\fill (-0.523236484, 2.173987944) circle (0.2) ;
\fill (-2.351762483, -0.684991403) circle (0.2) ;
\fill (0.858998149, -2.502423262) circle (0.2) ;
\fill (2.628856255, 1.043606627) circle (0.2) ;
\fill (-1.237342121, 2.732944287) circle (0.2) ;
\fill (-2.81597484, -1.438848741) circle (0.2) ;
\fill (1.646854674, -2.878866041) circle (0.2) ;
\fill (2.922300856, 1.860149916) circle (0.2) ;
\fill (-2.077571636, 2.946811174) circle (0.2) ;
\fill (-2.952832817, -2.29799442) circle (0.2) ;
\fill (2.520323763, -2.940742786) circle (0.2) ;
\fill (2.910885208, 2.743491809) circle (0.2) ;
\fill (-2.966454668, 2.863589827) circle (0.2) ;
\fill (-2.799185409, -3.188190874) circle (0.2) ;
\fill (3.407700677, -2.718009583) circle (0.2) ;
\fill (2.620416132, 3.624005973) circle (0.2) ;
\fill (-3.836150681, 2.506780396) circle (0.2) ;
\fill (-2.377503255, -4.043201488) circle (0.2) ;
\fill (4.244248853, -2.233014033) circle (0.2) ;
\fill (2.073772547, 4.438408208) circle (0.2) ;
\fill (-4.624821309, 1.900270469) circle (0.2) ;
\fill (-1.713032155, -4.802657685) circle (0.2) ;
\fill (4.97111616, -1.512614995) circle (0.2) ;
\fill (1.299609407, 5.129426419) circle (0.2) ;
\fill (-5.276850588, 1.074638486) circle (0.2) ;
\fill (-0.838357388, -5.412684813) circle (0.2) ;
\fill (5.53626083, -0.591452464) circle (0.2) ;
\fill (0.334640189, 5.646947489) circle (0.2) ;
\fill (-5.744152243, 0.068665893) circle (0.2) ;
\fill (0.205697667, -5.827322582) circle (0.2) ;
\fill (5.895947391, 0.487651886) circle (0.2) ;
\fill (-0.776374031, 5.94955825) circle (0.2) ;
\fill (-5.987730634, -1.071019073) circle (0.2) ;
\fill (1.370721633, -6.010085041) circle (0.2) ;
\fill (6.016288014, 1.674598021) circle (0.2) ;
\fill (-1.981748366, 6.006053064) circle (0.2) ;
\fill (-5.979141496, -2.291258818) circle (0.2) ;
\fill (2.602203818, -5.935363114) circle (0.2) ;
\fill (5.87457682, 2.913648433) circle (0.2) ;
\fill (-3.224650729, 5.796691097) circle (0.2) ;
\fill (-5.70166437	, -3.534264197) circle (0.2) ;
\fill (3.841540204, -5.589505243) circle (0.2) ;
\fill (5.460272619, 4.145530476) circle (0.2) ;
\fill (-4.44528959	, 5.314075692) circle (0.2) ;
\fill (-5.151073809, -4.739877489) circle (0.2) ;
\fill (5.028361984, -4.971476215) circle (0.2) ;
\fill (4.775541661, 5.309821263) circle (0.2) ;
\fill (-5.583346383, 4.56357789) circle (0.2) ;
\fill (-4.335941002, -5.848043744) circle (0.2) ;
\fill (6.103037532, -4.093034679) circle (0.2) ;
\fill (3.835309306, 6.347472137) circle (0.2) ;
\fill (-6.580514524, 3.563260951) circle (0.2) ;
\fill (-3.277430237, -6.801356559) circle (0.2) ;
\fill (7.009217285, -2.978401089) circle (0.2) ;
\fill (2.666799373, 7.203345133) circle (0.2) ;
\fill (-7.383020071, 2.34329141) circle (0.2) ;
\fill (-2.008582395, -7.547555681) circle (0.2) ;
\fill (7.696301159, -1.6634147) circle (0.2) ;
\fill (1.308566082, 7.828643229) circle (0.2) ;
\fill (-7.944007972, 0.944847787) circle (0.2) ;
\fill (-0.573102565, -8.041862561) circle (0.2) ;
\fill (8.121716897, -0.194202598) circle (0.2) ;
\fill (-0.190952664, 8.183125141) circle (0.2) ;
\fill (-8.225687144, -0.581438735) circle (0.2) ;
\fill (0.976308403, -8.249049758) circle (0.2) ;
\fill (8.252908035, 1.374594111) circle (0.2) ;
\fill (-1.775310396, 8.237006313) circle (0.2) ;
\fill (-8.201139171, -2.177456382) circle (0.2) ;
\fill (2.580018325, -8.145152267) circle (0.2) ;
\fill (8.068943039, 2.981972206) circle (0.2) ;
\fill (-3.382286353, 7.972461291) circle (0.2) ;
\fill (-7.855709631, -3.779924099) circle (0.2) ;
\fill (4.173846465, -7.71874379) circle (0.2) ;
\fill (7.561672795, 4.563014853) circle (0.2) ;
\fill (-4.946393761, 7.384659015) circle (0.2) ;
\fill (-7.187918067, -5.322953491) circle (0.2) ;
\fill (5.691672863, -6.971718584) circle (0.2) ;
\fill (6.736381854, 6.051541912) circle (0.2) ;
\fill (-6.401564575, 6.482281311) circle (0.2) ;
\fill (-6.209841904, -6.740761346) circle (0.2) ;
\fill (7.068171914, -5.919539323) circle (0.2) ;
\fill (5.611899095, 7.382857749) circle (0.2) ;
\fill (-7.683904658, 5.287495552) circle (0.2) ;
\fill (-4.946950664, -7.970425279) circle (0.2) ;
\fill (8.241561531, -4.590932752) circle (0.2) ;
\fill (4.220155076, 8.496486988) circle (0.2) ;
\fill (-8.734409196, 3.835374297) circle (0.2) ;
\fill (-3.437388833, -8.954571906) circle (0.2) ;
\fill (9.15625723, -3.027037088) circle (0.2) ;
\fill (2.605195583, 9.338787714) circle (0.2) ;
\fill (-9.501528309, 2.172776978) circle (0.2) ;
\fill (-1.73072799	, -9.643888252) circle (0.2) ;
\fill (9.765322848, -1.280027219) circle (0.2) ;
\fill (0.821682884, 9.865335131) circle (0.2) ;
\fill (-9.943477428, 0.356730472) circle (0.2) ;
\fill (0.113769697, -9.999352802) circle (0.2) ;
\fill (10.0326163654435, 0.588734969095079) circle (0.2) ;
\fill (-1.06706293529127, 10.0429764856903) circle (0.2) ;
\fill (-10.0301958508321, -1.54763406332044) circle (0.2) ;
\fill (2.0293143904353, -9.99409241025778) circle (0.2) ;
\fill (9.93454018050377, 2.51095826368263) circle (0.2) ;
\fill (-2.99141112319413, 9.85146991529845) circle (0.2) ;
\fill (-9.74486963746393, -3.46951232147021) circle (0.2) ;
\fill (3.94409797180841, -9.61478503081461) circle (0.2) ;
\fill (9.46131969052783, 4.41400381894152) circle (0.2) ;
\fill (-4.87806812487338, 9.2846352308044) circle (0.2) ;
\fill (-9.08495124898736, -5.33513456283185) circle (0.2) ;
\fill (5.78405511222333, -8.86254514565445) circle (0.2) ;
\fill (8.61775180055148, 6.22369294744622) circle (0.2) ;
\fill (-6.65292531340575, 8.35096310459129) circle (0.2) ;
\fill (-8.0626273484935, -7.07064638058818) circle (0.2) ;
\fill (7.47577007257865, -7.75324846899269) circle (0.2) ;
\fill (7.42338515390055, 7.86723285894405) circle (0.2) ;
\fill (-8.24399650647049, 7.07364980765251) circle (0.2) ;
\fill (-6.70470737931605, -8.60505078182255) circle (0.2) ;
\fill (8.94941609878173, -6.31727405538705) circle (0.2) ;
\fill (5.91211582003376, 9.27614610333982) circle (0.2) ;
\fill (-9.58433019005179, 5.49004688577992) circle (0.2) ;
\fill (-5.05192799795299, -9.87309594319323) circle (0.2) ;
\fill (10.1416114964359, -4.59866461653378) circle (0.2) ;
\fill (4.13120497936299, 10.3890878049272) circle (0.2) ;
\fill (-10.6147808238573, 3.65053805095509) circle (0.2) ;
\fill (-3.15769136147347, -10.8179935877997) circle (0.2) ;
\fill (10.9980781853378, -2.65372874069619) circle (0.2) ;
\fill (2.13974795207348, 11.1544376237261) circle (0.2) ;
\fill (-11.2865275785817, 1.61687823224722) circle (0.2) ;
\fill (-1.0862777416431, -11.3938580238658) circle (0.2) ;
\fill (11.4759947376923, -0.549130931981181) circle (0.2) ;
\fill (0.00664583678150759, 11.532560679782) circle (0.2) ;
\fill (-11.5632372366825, -0.53994870885903) circle (0.2) ;
\fill (1.08940591274941, -11.5677653311807) circle (0.2) ;
\fill (11.54594639265, 1.64046392768979) circle (0.2) ;
\fill (-2.19184880435134, 11.4976431854039) circle (0.2) ;
\fill (-11.4227804924582, -2.74227748798641) circle (0.2) ;
\fill (3.29046085188387, -11.3213456524487) circle (0.2) ;
\fill (11.1933889477963, 3.83510676035917) circle (0.2) ;
\fill (-4.37492315398156, 11.0390238425667) circle (0.2) ;
\fill (-10.8584270688299, -4.90862114966132) circle (0.2) ;
\fill (5.4349181481498, -10.6518385606858) circle (0.2) ;
\fill (10.41956123549, 5.95254094146984) circle (0.2) ;
\fill (-6.46022881276794, 10.1619606221773) circle (0.2) ;
\fill (-9.87946433695444, -6.95673662106345) circle (0.2) ;
\fill (7.44083786338859, -9.5725614069988) circle (0.2) ;
\fill (9.24180144316953, 7.91132770684097) circle (0.2) ;
\fill (-8.36702598310921, 8.88779366310759) circle (0.2) ;
\fill (-8.51120576646421, -8.80678013810419) circle (0.2) ;

\draw[thin,->] (0,-13) -- (0,13);
\draw[thin,->] (-13,0) -- (13,0);

\foreach \z in {0}
   \draw (\z cm,10pt) -- (\z cm,-10pt) node[anchor=north east]   {\tiny{$\z$}};
\foreach \x in {5}
   \draw (\x cm,10pt) -- (\x cm,-10pt) node[anchor=north] {\tiny{$\x$}};
\foreach \x in {-10, -5, 10}
   \draw (\x cm,10pt) -- (\x cm,-10pt) node[anchor=north] {};
\foreach \y in {5}
   \draw (10pt, \y cm) -- (-10pt, \y cm) node[anchor=east] {\tiny{$\y$}};
\foreach \y in {-10, -5, 10}
   \draw (10pt, \y cm) -- (-10pt, \y cm) node[anchor=east] {};      
\end{tikzpicture}
\caption{First 150 points on Fermat's spiral $\left\{\sqrt{n}\cdot e(n\varphi)\right\}_{ n\ge 1}$, where $\varphi=(1+\sqrt{5})/2$ is the Golden Ratio.}
\label{sunflower}
\end{figure}

Marklof~\cite{marklof} then undertook a more general study of planar spiral sets $\mathfrak{S}_{\bm{x}}=\left\{\sqrt{k}\cdot e(x_k)\right\}_{k\ge 1}$ by providing a necessary and sufficient condition on the sequence $\bm{x}=\left(x_k\right)_{k\ge 1}$ for $\mathfrak{S}_{\bm{x}}$ to be Delone\footnote{At this level of generality, it should be mentioned that the terminology ``spiral'' sets, inspired by the case of the sunflower spiral (see Figure~\ref{sunflower}), can be slightly misleading~: the beatiful pictures in~\cite{marklof} illustrate that for various choices of the real sequence $\bm{x}$, the set $\mathfrak{S}_{\bm{x}}$ need not resemble a spiral.}. To state his result, given parameters $h>0$ and $R>0$, let $g_R^h$ (resp.~$G_R^h$) \sloppy denote the minimal gap (resp., the maximal gap) between the fractional parts of the set $\left\{x_k\,:\, R^2\le k< \left(R+h\right)^2\right\}$. Then, as established in~\cite[Proposition 2]{marklof}, $\mathfrak{S}_{\bm{x}}$ is Delone if, and only if, there exist $h, h'>0$ such that 
\begin{equation}\label{marspac}
\inf_{R\ge 1}\; R\cdot g_R^h\;>\;0\qquad\textrm{ and }\qquad \sup_{R\ge 1} R\cdot G_R^{h'}\;<\;\infty.
\end{equation} 
Thus, the Delone property of the set $\mathfrak{S}_{\bm{x}}$  is closely related to the spacing of the sequence $\bm{x}$ modulo one. As proved in~\cite[Proposition 3]{marklof}, \eqref{marspac} implies in particular that the set $\left\{\sqrt{k}\cdot e(\alpha\sqrt{k})\right\}_{k\ge 1}$ is Delone for \emph{any} $\alpha\neq 0$.\\

A question raised by Akiyama at the end of~\cite{akiyama} is to determine whether this planar theory can be extended to higher dimensions, and in particular how to obtain higher dimensional spiral Delone sets. This question also appears in~\cite{mthfflow}, where the problem of generalising the sunflower spiral to higher dimensions is left open.  In view of the above discussion, we will from now on restrict ourselves to the case where the spiral set 
\begin{equation}\label{sk}
\widetilde{\mathfrak{S}}\;=\;\left\{\bm{s}_k\right\}_{k\ge 1}
\end{equation} 
is defined in $\R^n$ by
\begin{equation*}
\bm{s}_k\;=\;\sqrt[n]{k}\cdot \bm{u}_k
\end{equation*} 
for all $k\ge 1$. Here,  $\left(\bm{u}_k\right)_{k\ge 1}$ denotes again a spherical sequence. 

As suggested by the conditions established in the planar case by Marklof, and as further explicited in Section~\ref{seccns} below, the Delone property of $\widetilde{\mathfrak{S}}$ strongly relies on the distribution properties of the spherical sequence $\left(\bm{u}_k\right)_{k\ge 1}$. This is, of course, closely related to the well--known problem of ``evenly'' distributing  points on the unit sphere $\Sph^d$, although the precise properties that the sequence $\left(\bm{u}_k\right)_{k\ge 1}$ must satisfy in our case are  different from equidistribution. The main difference with the planar case is that when $d\ge 2$, the sphere $\Sph^d$ does not enjoy an abelian group structure, and therefore cannot be identified any more with the torus $\T^d=\R^d\backslash\Z^d$. 

The main result of this paper reads as follows~:

\begin{thm}\label{mainthm}
Let $n\ge 2$. Then, there exists an effectively constructible sequence  $\left(\bm{u}_k\right)_{k\ge 1}$ in $\Sph^d$ such that the spiral set $\widetilde{\mathfrak{S}}$ defined in~\eqref{sk} is Delone.
\end{thm}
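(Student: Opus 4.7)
The plan is to first invoke the characterisation of the Delone property of $\widetilde{\mathfrak{S}}$ to be established in Section~\ref{seccns} -- the higher dimensional analogue of Marklof's condition~\eqref{marspac} -- and then to exhibit a spherical sequence satisfying the resulting packing and covering conditions. One anticipates that this characterisation should state that $\widetilde{\mathfrak{S}}$ is Delone if, and only if, there exist constants $0<c<C$ and parameters $h,h'>0$ such that, for every $R\ge 1$, the points $\left\{\bm{u}_k\,:\,R^n\le k<(R+h)^n\right\}$ are pairwise $(c/R)$--separated on $\Sph^d$, while $\left\{\bm{u}_k\,:\,R^n\le k<(R+h')^n\right\}$ forms a $(C/R)$--covering of $\Sph^d$. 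Each such window contains $\asymp R^{n-1}$ indices, whereas the optimal packing and covering radii of $R^{n-1}$ points on the $d$--dimensional sphere are both of order $1/R$, so the conditions lie exactly at the sphere--packing threshold~: the problem is sharp but achievable.

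For the construction I would group the positive integers into successive blocks $B_R=\left\{k\in\N\,:\,R^n\le k<(R+1)^n\right\}$ and arrange the corresponding points $\left\{\bm{u}_k\,:\,k\in B_R\right\}$ as a near--optimal $(1/R)$--net on $\Sph^d$. An effective way to achieve this is by induction on the dimension $d$, using the generalised spherical coordinates $(\phi,\bm{v})\in[0,\pi]\times\Sph^{d-1}$ with volume element $\sin^{d-1}(\phi)\,\mathrm{d}\phi\,\mathrm{d}\mu_{d-1}(\bm{v})$~: one partitions $[0,\pi]$ into $\asymp R$ latitude bands of comparable $\sin^{d-1}$--weighted measure, distributes the indices of $B_R$ among the bands proportionally to their measure, and within each band applies the $(d-1)$--dimensional sequence delivered by the inductive hypothesis, the base case $d=1$ being handled by a uniform arithmetic progression of angles (or, more elegantly, by a golden--ratio rotation in the spirit of the sunflower spiral). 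The points within a single block $B_R$ then satisfy the desired packing and covering bounds with constants depending only on $n$.

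The main obstacle is the compatibility between consecutive blocks~: the Delone property has to be verified globally, and in particular for pairs $\bm{s}_k,\bm{s}_j$ with $k\in B_R$ and $j\in B_{R+1}$ whose radii $\sqrt[n]{k}$ and $\sqrt[n]{j}$ may differ by as little as $1/\left(n R^{n-1}\right)$. Uncontrolled alignment between the nets attached to $B_R$ and $B_{R+1}$ could then create arbitrarily close pairs $\bm{s}_k,\bm{s}_j$, or conversely leave large regions uncovered near the sphere of radius $R+1$. I would cope with this by selecting the latitude partitions and the inductive spherical codes in a scale--covariant manner, with all implicit constants uniform in $R$, so that passage from $R$ to $R+1$ refines rather than rebuilds the net. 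Once this uniformity is in place, verifying the Delone property reduces to a direct geometric computation comparing the radial increment $\sqrt[n]{j}-\sqrt[n]{k}$ with the tangential contribution $\sqrt[n]{k}\cdot\dsph(\bm{u}_k,\bm{u}_j)$, both of which will be of order one whenever $k$ and $j$ lie in the same or in neighbouring blocks, thereby delivering the uniform upper and lower Euclidean distance bounds required by Theorem~\ref{mainthm}.
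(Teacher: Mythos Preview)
Your anticipated characterisation is essentially the content of Propositions~\ref{cnsreldens} and~\ref{cnsunifdiscr}, so that part of the plan is sound. The construction, however, takes a route quite different from the paper's and contains a genuine gap at exactly the point you flag as the main obstacle.

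The paper does not build block--by--block nets. It fixes a badly approximable vector $\bm{\alpha}\in\R^d$, takes the single toral sequence $\bm{x}_k=\{k\bm{\alpha}\}$, and lifts it to $\Sph^d$ via a pair of bi--Lipschitz charts $\tau_1,\tau_2:\overline{\K}_d\to\Sph^d$ covering the two hemispheres (Proposition~\ref{lifting}). The separation condition then falls out of the defining inequality $\|m\bm{\alpha}\|_{\Z^d}\gg|m|^{-1/d}$, valid for \emph{every} nonzero $m$ simultaneously, so there are no block boundaries to negotiate. The covering condition is the quantitative dispersion estimate of Proposition~\ref{propdensbad}, proved via the geometry of numbers (best approximants, transference, and Banaszczyk's inequality relating covering radius and dual shortest vector). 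What this approach buys is precisely the global uniformity you are trying to engineer by hand.

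Your proposal, by contrast, has not actually solved the cross--block problem. ``Refines rather than rebuilds'' cannot mean that the $B_{R+1}$--net contains the $B_R$--points: if $\bm{u}_k=\bm{u}_{k'}$ with $k$ near the top of $B_R$ and $k'$ near the bottom of $B_{R+1}$, then $\|\bm{s}_k-\bm{s}_{k'}\|_2=\left|\sqrt[n]{k}-\sqrt[n]{k'}\right|\asymp|k-k'|/R^{n-1}$, which can be as small as $R^{1-n}\to 0$. So the successive nets must be pairwise \emph{disjoint} on $\Sph^d$ yet jointly $(c/R)$--separated, and you give no mechanism for that; ``scale--covariant'' is a name, not a construction, and the inductive latitude--band scheme provides no control on where the $B_{R+1}$--points land relative to the $B_R$--points. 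Relatedly, your closing claim that the radial increment and the tangential contribution are ``both of order one'' for indices in neighbouring blocks is false as stated: the radial increment $\sqrt[n]{j}-\sqrt[n]{k}$ is of order $|j-k|/R^{n-1}$ and can be arbitrarily small, so the whole burden falls on the tangential term, which is exactly the quantity you have not controlled across blocks.
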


Akiyama~\cite{akiyama} suggests that the sequence  $\left(\bm{u}_k\right)_{k\ge 1}$ might be chosen as the orbit of a transitive map, namely that $\bm{u}_k=T^k(\bm{v})$ for all $k\ge 1$ with $\bm{v}\in\Sph^d$ and $T~: \Sph^d\rightarrow\Sph^d$ transitive. We adopt here a rather different approach in order to define our spherical sequence. 

In Section~\ref{seccns}, we first provide necessary and sufficient conditions on the spherical sequence $\left(\bm{u}_k\right)_{k\ge 1}$  for the spiral set~\eqref{sk} to be uniformly discrete and relatively dense. These conditions are expressed in terms of packing and covering properties of the sequence  $\left(\bm{u}_k\right)_{k\ge 1}$ and provide a natural generalisation of the planar case. 

The main idea in the construction of a sequence $\left(\bm{u}_k\right)_{k\ge 1}$ satisfying these conditions is then presented in Section~\ref{liftsection}~: it relies on a process of lifting  (Proposition~\ref{lifting}) which maps a toral sequence to the sphere by preserving the required packing and covering properties. As detailed in Section~\ref{liftsection}, this lifting technique also constitutes a generalisation of the planar case.

Section~\ref{secdispersion} completes the construction of the spherical sequence by defining it from the lifting of a linear toral flow. The main result in this section (Proposition~\ref{propdensbad}) provides a sharp, quantitative estimate on the rate of dispersion of the multiples of a badly approximable vector modulo one.

Section~\ref{sec3} focuses on the three dimensional case by providing an alternative to the lifting argument introduced in Section~\ref{liftsection}. A spiral Delone set is defined there from a spherical sequence obtained from a continuous spherical flow (which results from the radial projection of a flow lying on the surface of a tetrahedron). 

Section~\ref{oppb} concludes the paper with a collection of open problems emerging from the theories elaborated within.\\

\paragraph{\textbf{Notation and background}} If $x,y>0$, the Vinogradov notation $x\ll y$ and $x\gg y$ implies the existence of constants $\gamma, \gamma'>0$ independent of $x$ and $y$ such that $x\le \gamma y$ and $x\ge \gamma' y$, respectively. Also, $x\asymp y$ means that the two relations $x\ll y$ and $x\gg y$ hold simultaneously.

Given $\bm{u}, \bm{v}\in\Sph^d$, the geodesic length between $\bm{u}$ and $\bm{v}$ is denoted by $\dsph(\bm{u}, \bm{v})$; that is, $$\dsph(\bm{u}, \bm{v})\;=\; \arccos(\bm{u\cdot v}),$$ where $\bm{x\cdot y}$ stands for the usual scalar product between two vectors $\bm{x}, \bm{y}\in\R^n$. It is easily seen that this geodesic distance is equivalent to the one induced by the Euclidean norm $\left\|\, .\, \right\|_2$ in the following sense~:  for all $\bm{u}, \bm{v}\in\Sph^d$, 
\begin{equation}\label{elem0}
\dsph(\bm{u}, \bm{v})\; \asymp \; \left\|\, \bm{u}- \bm{v}\, \right\|_2.
\end{equation}
Also, $\sigma_d$ denotes the measure on $\Sph^d$ induced by the $n$--dimensional Lebesgue measure $\lambda_n$. Given $\bm{w}\in\Sph^d$ and $\rho>0$, $\mathcal{C}_d\left(\bm{w}, \rho\right)$ stands for the spherical cap centered at $\bm{w}$ with geodesic length $\rho$; that is, $$\mathcal{C}_d\left(\bm{w}, \rho\right)\;=\; \left\{\bm{u}\in\Sph^d\;:\; \dsph\left(\bm{w}, \bm{u}\right)\le \rho\right\}.$$

Throughout, set $$\K_d:=\left[0,1\right)^d\qquad \textrm{and}\qquad \K'_d=[-1,1)^d.$$ It will be convenient whenever needed to see $\K_d$ as the unit torus $\T^d=\R^d\backslash\Z^d$ upon suitably identifying its edges, and $ \K'_d$ as the $2\times2$ torus.

Given a vector $\bm{x}\in\R^d$, $\left\{\bm{x}\right\}$ stands for the vector each of whose components are the  fractional parts of the corresponding components of $\bm{x}$. The sup norm of $\bm{x}$ is denoted by $\left\|\bm{x}\right\|_{\infty}$ and we set $$\left\|\bm{x}\right\|_{\Z^d }\; =\; \min_{\bm{m}\in\Z^d}\; \left\|\bm{x}-\bm{m}\right\|_{\infty}.$$ When $d=1$, the notation $\left\|\, . \, \right\|_{\Z^d }$ is simplified to $\left\|\, . \, \right\|$. Note that, for any $\bm{x}, \bm{y}\in\R^d$, $\left\| \bm{x} - \bm{y}\right\|_{\Z^d }$ is the toral distance between the projections of $\bm{x}$ and $\bm{y}$ onto $\T^d$, and that this distance satisfies the following relation which will often be used implicitly~:  $$\left\| \bm{x} - \bm{y}\right\|_{\Z^d }\;\le\; \left\| \bm{x}- \bm{y} \right\|_{\infty}.$$ 

We will also occasionally use the notation $\bm{0}$ and $\bm{1}$ to denote a vector all of whose components are equal to 0 and 1, respectively. The dimension of these vectors will be clear from the context.

Given a subset $\mathcal{D}\subset\R^d$, its topological interior, closure and boundary are denoted by $\mathcal{D}^{\circ}, \overline{\mathcal{D}}$ and $\partial \mathcal{D}$, respectively (in particular, $\partial \mathcal{D}= \overline{\mathcal{D}}\backslash \mathcal{D}^{\circ}$).

Finally, the cardinality of a finite set $A$ is denoted by $\# A$.\\

\paragraph{\textbf{Acknowledgments}} The first--named author's work was supported in part by EPSRC Grant EP/T021225/1. He would like to thank Daniel El--Baz for drawing his attention to the reference~\cite{plato}.

\section{On the Delone Property of Spiral Sets}\label{seccns}

Let $\left(\bm{u}_k\right)_{k\ge 1}$ be a sequence lying in $\Sph^d$. The goal in this section is to provide necessary and sufficient conditions for the spiral sequence $\widetilde{\mathfrak{S}}$ defined in~\eqref{sk} to be Delone.

The property of relative density of this set is addressed in the following proposition~:

\begin{prop}[Necessary and sufficient condition for relative density]\label{cnsreldens} The set $\widetilde{\mathfrak{S}}$ defined in~\eqref{sk} is relatively dense if, and only if, there exist two constants $c, C>0$ such that for all $k\ge 1$ and all $\bm{v}\in\Sph^d$, there exists an integer $m$ such that 
\begin{equation}\label{reldens}
 \left|m\right|\le ck^{1-1/n}\qquad \textrm{and}\qquad \dsph\left(\bm{u}_{k+m}, \bm{v}\right)\le\frac{C}{\sqrt[n]{k}}\cdotp
\end{equation} 
\end{prop}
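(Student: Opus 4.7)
The plan is to translate relative density of $\widetilde{\mathfrak{S}}$ in $\R^n$ into a simultaneous radial--angular approximation condition at the spherical shell of radius $\sqrt[n]{k}$, exploiting the polar decomposition of $\R^n$. The central geometric tool I would rely on is the identity
\begin{equation*}
\left\|r\bm{v} - r'\bm{u}\right\|_2^2 \;=\; (r-r')^2 + 2rr'\bigl(1 - \bm{u}\cdot\bm{v}\bigr),
\end{equation*}
valid for $r, r'\ge 0$ and $\bm{u}, \bm{v}\in\Sph^d$, together with the uniform relation $1 - \bm{u}\cdot\bm{v} = 2\sin^2\!\bigl(\dsph(\bm{u},\bm{v})/2\bigr) \asymp \dsph(\bm{u},\bm{v})^2$. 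Both implications will then be driven by the mean value estimate $\sqrt[n]{k+m} - \sqrt[n]{k} \asymp m/(n\,k^{1-1/n})$, which I would establish uniformly whenever $|m| \le k/2$, as this is the precise rate at which a radial shift of order $O(1)$ corresponds to an integer shift of order $k^{1-1/n}$.

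For the sufficiency direction, given a nonzero point $\bm{x} = r\bm{v}\in\R^n$, I would pick $k = \lfloor r^n \rfloor + 1$ so that $|r - \sqrt[n]{k}|$ is bounded, invoke~\eqref{reldens} to produce an integer $m$, and plug the resulting estimates into the geometric identity. The mean value estimate controls the radial contribution by $|r - \sqrt[n]{k+m}| \ll 1 + c/n$, while the angular contribution $\sqrt[n]{k+m}\cdot\dsph(\bm{u}_{k+m},\bm{v}) \ll C$ is immediate from $k+m \asymp k$, so that $\|\bm{x} - \bm{s}_{k+m}\|_2$ is bounded by a constant depending only on $c$ and $C$. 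The case $\bm{x} = \bm{0}$ and the finitely many small values of $r$ are absorbed into the final constant.

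For the necessity direction, assuming $\widetilde{\mathfrak{S}}$ is relatively dense with constant $R$, I would apply the defining property to the point $\bm{x} := \sqrt[n]{k}\,\bm{v}$, producing some $\ell \ge 1$ with $\|\bm{x} - \sqrt[n]{\ell}\,\bm{u}_\ell\|_2 \le R$. Setting $m := \ell - k$, the identity simultaneously forces $|\sqrt[n]{k} - \sqrt[n]{k+m}| \le R$ and $2\sqrt[n]{k(k+m)}\,\bigl(1 - \bm{u}_{k+m}\cdot\bm{v}\bigr) \le R^2$. Inverting the mean value estimate translates these into the required bounds $|m| \ll R\,k^{1-1/n}$ and $\dsph(\bm{u}_{k+m},\bm{v}) \ll R/\sqrt[n]{k}$.

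The principal obstacle is the dual role played by $k$~: the mean value estimate is uniform only when $|m| \le k/2$, which forces a mild compatibility between $c$ and an initial threshold $k_0$ below which the above argument breaks down. These finitely many exceptional values must be dispatched by hand --- the sphere may have to be covered by the caps around a bounded set of $\bm{u}_j$'s, for instance by taking $C \ge \pi\sqrt[n]{k_0}$ so that~\eqref{reldens} becomes trivially valid on this range --- but this only amounts to enlarging the constants and involves no serious difficulty beyond careful bookkeeping.
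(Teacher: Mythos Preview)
Your proposal is correct and follows essentially the same route as the paper: the paper's relation~\eqref{elem3} is precisely the polar-coordinate identity you write down (up to the equivalence $1-\bm{u}\cdot\bm{v}\asymp\dsph(\bm{u},\bm{v})^2$), and the paper's auxiliary inequalities~\eqref{elem1}--\eqref{elem2} play exactly the role of your mean value estimate for $\sqrt[n]{k+m}-\sqrt[n]{k}$. The paper's treatment of the small-$k$ range is slightly more implicit than yours (it simply notes that it suffices to have $k$ large enough in the necessity direction), but the underlying fix --- enlarging the constants to absorb finitely many exceptional values --- is the same one you spell out.
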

 
Condition~\eqref{reldens} can be rephrased in terms of a covering property of the sequence $\left(\bm{u}_k\right)_{k\ge 1}$~: it is saying that for any $k\ge 1$, the spherical caps with radius $Ck^{-1/n}$ centered at $\bm{u}_{k+m}$ cover the sphere when $\left|m\right|\le ck^{1-1/n}$; that is, that $$\bigcup_{\left|m\right|\le ck^{1-1/n}} \mathcal{C}_d\left(\bm{u}_{k+m}, \frac{C}{\sqrt[n]{k}}\right)\;=\; \Sph^d$$ for all $k\ge 1$.

The proof of Proposition~\ref{cnsreldens} relies on a few elementary observations. First, note that 
\begin{equation}\label{elem1}
\left|\sqrt[n]{1+x}-1\right| \le 2\left|x\right|
\end{equation} 
whenever the left--hand side is well-defined. Conversely, a Taylor expansion argument shows that whenever $\eta>0$ is small enough, 
\begin{equation}\label{elem2}
\left|\sqrt[n]{1+x}-1\right|<  \eta \qquad \textrm{implies that} \qquad x\in\left(-2n\eta, 2n\eta\right).
\end{equation} 
Also, given $\bm{x}, \bm{y}\in\R^n$ expressed in polar coordinates as $\bm{x}=r\bm{u}$ and $ \bm{y}=\rho\bm{v}$ with $r,\rho>0$ and $\bm{u}, \bm{v}\in\Sph^d$, it holds that\footnote{To see this, in view of~\eqref{elem0},  it is enough to note the following~: when working in the plane spanned by the vectors $\bm{u}$ and $\bm{v}$, one obtains from the usual formula for the distance between two planar points expressed in polar coordinates that  $$\left\|\bm{x}-\bm{y}\right\|_2\;\asymp\; \left|\rho-r\right|+\sqrt{\rho r}\left\|\bm{u}-\bm{v}\right\|_2.$$} 
\begin{equation}\label{elem3}
\left\|\bm{x}-\bm{y}\right\|_2\;\asymp \; \left|r-\rho\right|+\sqrt{r\rho}\cdot\dsph(\bm{u},\bm{v}).
\end{equation} 

\begin{proof}[Proof of Proposition~\ref{cnsreldens}]
Assume first that~\eqref{reldens} holds and fix $\bm{x}\in\R^n$. The goal is to show that there exists an index $l\ge 1$ such that 
\begin{equation}\label{inegeprouver}
\left\|\bm{x}-\bm{s}_l\right\|_2\ll 1
\end{equation} 
(with an absolute implicit constant). Since the sequence $\left(\sqrt[n]{k+1}-\sqrt[n]{k}\right)_{k\ge 1}$ tends to zero as $k$ tends to infinity, it is enough to establish this fact when $\bm{x}$ is of the shape $\bm{x}=\sqrt[n]{k}\bm{v}$ for some $k\ge 1$ and some $\bm{v}\in\Sph^d$. 

Let $l\ge 1$. Then, 
\begin{align}
\left\|\bm{x}-\bm{s}_l\right\|_2\; &=\; \left\| \sqrt[n]{k}\bm{v}+\sqrt[n]{k}\bm{u}_l-\sqrt[n]{k}\bm{u}_l-\sqrt[n]{l}\bm{u}_l \right\|_2\nonumber\\
&\le\; \left|\sqrt[n]{k}-\sqrt[n]{l}\right|+\sqrt[n]{k}\left\|\bm{v}-\bm{u}_l\right\|_2\nonumber\\
&\underset{\eqref{elem0}}{\ll}\left|\sqrt[n]{k}-\sqrt[n]{l}\right|+\sqrt[n]{k}\cdot\dsph\left(\bm{v},\bm{u}_l\right).\label{ineqreldens}
\end{align}
Set $l=k+m$ for some $m\in\Z$. It then follows from~\eqref{elem1} that $$\left|\sqrt[n]{k}-\sqrt[n]{l}\right|\;=\;\sqrt[n]{k}\left|\sqrt[n]{1+\frac{m}{k}}-1\right|\;\le\; \frac{2\left|m\right|}{k^{1-1/n}}\cdotp$$ Upon choosing $m$ such that~\eqref{reldens} holds, this and~\eqref{ineqreldens} are easily seen to imply~\eqref{inegeprouver}.

Conversely, assume that the spiral set $\widetilde{\mathfrak{S}}$ defined by~\eqref{sk} is relatively dense. The goal is to show that for some absolute constants $c, C>0$, \eqref{reldens} holds for all $k\ge 1$ and all $\bm{v}\in\Sph^d$. To this end, fix  $k\ge 1$ and $\bm{v}\in\Sph^d$. By assumption, there exists an index $l\ge 1$ such that $$\left\|\bm{s}_l- \sqrt[n]{k}\bm{v}\right\|_2\,\ll\,1,$$ where the implicit constant is absolute. Decomposing the integer $l$ as $l=k+m$ for some $m\in\Z$, one thus obtains that $$\left\|\sqrt[n]{1+\frac{m}{k}}\bm{u}_{k+m}- \bm{v}\right\|_2\,\ll\,\frac{1}{\sqrt[n]{k}}\cdotp$$ From~\eqref{elem3}, this implies that $$\left|\sqrt[n]{1+\frac{m}{k}}-1\right|\,\ll\,\frac{1}{\sqrt[n]{k}}\qquad\textrm{and}\qquad \dsph\left(\bm{u}_{k+m}, \bm{v}\right)\,\ll\,\frac{1}{\sqrt[n]{k}}\cdotp$$ When $k$ is large enough, it follows from~\eqref{elem2} and from  the first inequality above that $\left|m\right|\ll k^{1-1/n}$. This suffices to complete the proof.
\end{proof}

The next proposition characterizes the cases when the spiral set $\widetilde{\mathfrak{S}}$ is  uniformly discrete.

\begin{prop}[Necessary and sufficient condition for uniform discreteness] \label{cnsunifdiscr} The set $\widetilde{\mathfrak{S}}$ defined in~\eqref{sk} is uniformly discrete if, and only if, there exists $\kappa>0$ such that the following holds~:
\begin{equation}\label{unifdiscr}
\liminf_{k\rightarrow\infty}\;\min_{1\le |m|\le\kappa k^{1-1/n}}\; \sqrt[n]{k}\cdot\dsph\left(\bm{u}_{k+m}, \bm{u}_{k}\right)\;>\; 0.
\end{equation} 
\end{prop}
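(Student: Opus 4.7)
The plan is to mirror the strategy of Proposition~\ref{cnsreldens}, combining the polar-coordinate distance estimate~\eqref{elem3} with the elementary inequality~\eqref{elem1}. Writing any pair of distinct indices as $k$ and $l=k+m$ with $m\ge 1$ (after swapping if necessary), \eqref{elem3} gives
$$\|\bm{s}_k-\bm{s}_l\|_2 \;\asymp\; \left|\sqrt[n]{k+m}-\sqrt[n]{k}\right| + \sqrt{\sqrt[n]{k(k+m)}}\cdot \dsph(\bm{u}_k,\bm{u}_{k+m}),$$
and I would analyse each term according to the size of $m$ relative to $k^{1-1/n}$. Assuming~\eqref{unifdiscr} with parameter $\kappa$ and liminf some $c>0$: when $1\le m\le \kappa k^{1-1/n}$ one has $k+m\asymp k$, hence $\sqrt{\sqrt[n]{k(k+m)}}\asymp \sqrt[n]{k}$, and~\eqref{unifdiscr} forces the angular term to be at least $c/2$ for $k$ large enough. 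When $m>\kappa k^{1-1/n}$, the mean value theorem applied to $t\mapsto t^{1/n}$ yields $|\sqrt[n]{k+m}-\sqrt[n]{k}|\ge m/(n(k+m)^{1-1/n})$, and subdividing according to whether $m\le k$ or $m>k$ shows that this lower bound is itself bounded below by a positive constant depending only on $\kappa$ and $n$. Both regimes thus supply a uniform positive lower bound on $\|\bm{s}_k-\bm{s}_l\|_2$ once $k\ge k_0$. Pairs involving an index below $k_0$ are dealt with separately, using that the points $\bm{s}_j$ are pairwise distinct (the map $k\mapsto\sqrt[n]{k}$ being injective) and that $\|\bm{s}_l\|_2=\sqrt[n]{l}\to\infty$, so only finitely many such pairs can be at arbitrarily small distance.

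For the converse, I would argue by contradiction. Assume uniform discreteness with separation $\delta>0$ but suppose~\eqref{unifdiscr} fails, so that for every $\kappa>0$ the liminf in~\eqref{unifdiscr} vanishes. Fix such a $\kappa$ (to be chosen small at the end) and extract a sequence $k_j\to\infty$ together with integers $m_j\neq 0$, $|m_j|\le \kappa k_j^{1-1/n}$, with $\sqrt[n]{k_j}\cdot\dsph(\bm{u}_{k_j+m_j},\bm{u}_{k_j})\to 0$. Since $|m_j|/k_j\le \kappa k_j^{-1/n}\to 0$, the estimate~\eqref{elem1} gives
$$\left|\sqrt[n]{k_j+m_j}-\sqrt[n]{k_j}\right|\;=\;\sqrt[n]{k_j}\left|\sqrt[n]{1+m_j/k_j}-1\right|\;\le\; \frac{2|m_j|}{k_j^{1-1/n}}\;\le\; 2\kappa,$$
while $\sqrt{\sqrt[n]{k_j(k_j+m_j)}}\asymp\sqrt[n]{k_j}$ makes the angular contribution $o(1)$ by construction. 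Substituting into~\eqref{elem3} yields $\|\bm{s}_{k_j+m_j}-\bm{s}_{k_j}\|_2\ll \kappa+o(1)$; choosing $\kappa$ small enough relative to $\delta$ (and the implicit constants in~\eqref{elem3}) and letting $j\to\infty$ contradicts the separation $\delta$.

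The main point requiring care is the case analysis in the sufficient direction: the radial regime $m>\kappa k^{1-1/n}$ itself splits according to whether $m$ is comparable to $k$ or much larger, and one must verify that the resulting lower bound on $|\sqrt[n]{k+m}-\sqrt[n]{k}|$ is genuinely uniform in both subcases. All other steps reduce to direct applications of~\eqref{elem1} and~\eqref{elem3}, and the symmetric situation $m\le -1$ has been absorbed into the reduction $k\le l$ at the outset.
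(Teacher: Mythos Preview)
Your argument is correct and rests on the same two ingredients as the paper's proof, namely~\eqref{elem1} and~\eqref{elem3}, with the mean value theorem playing the role of~\eqref{elem2}. The only difference is organisational: you establish sufficiency directly (via the case split on the size of $m$) and necessity by contradiction, whereas the paper does the opposite---sufficiency by contraposition, which spares the case analysis on $m$, and necessity by a direct estimate showing that the radial term is at most half the separation constant once $|m|\le \kappa k^{1-1/n}$ for $\kappa$ chosen small enough.
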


Condition~\eqref{unifdiscr} can be rephrased in terms of a packing property of the spherical sequence $\left(\bm{u}_{k}\right)_{k\ge 1}$~: it is saying that there exist constants $\gamma,\kappa>0$ and an integer $k_0\ge 1$ such that for all $k\ge k_0$, the spherical caps with radius $\gamma k^{-1/n}$ centered at the points $\bm{u}_{k+m}$, where $ |m|\le \kappa k^{1-1/n}$,  are pairwise disjoint. In other words, 
\begin{equation}\label{packsep}
\mathcal{C}_d\left(\bm{u}_{l}, \frac{\gamma}{\sqrt[n]{k}}\right)\cap \mathcal{C}_d\left(\bm{u}_{k}, \frac{\gamma}{\sqrt[n]{k}}\right)\; =\; \emptyset
\end{equation} 
for all $k\ge k_0$ and $l\in\Z$ such that $1\le \left|l-k\right|\le \kappa k^{1-1/n}$.

\begin{proof}[Proof of Proposition~\eqref{cnsunifdiscr}] To show that~\eqref{unifdiscr} implies the uniform discreteness of the set $\widetilde{\mathfrak{S}}$, we argue by contraposition, assuming that this set is not uniformly discrete. Then, given $\epsilon>0$, there exist integers $k,m\ge 1$ such that $$\left\|\sqrt[n]{k+m}\bm{u}_{k+m}-\sqrt[n]{k}\bm{u}_k\right\|_2\;<\;\epsilon;$$ that is, such that $$\left\|\sqrt[n]{1+\frac{m}{k}}\bm{u}_{k+m}-\bm{u}_k\right\|_2\;<\;\frac{\epsilon}{\sqrt[n]{k}}\cdotp$$ From~\eqref{elem3}, this implies on the one hand that 
\begin{equation}\label{lesespi}
\sqrt[n]{k}\cdot\dsph\left(\bm{u}_{m+k}, \bm{u}_k\right)\;\ll\; \epsilon
\end{equation} 
and, on the other, that $$\left|\sqrt[n]{1+\frac{m}{k}}-1\right|\;\ll\; \frac{\epsilon}{\sqrt[n]{k}}\cdotp$$ It then follows from~\eqref{elem2} that for $k$ larger than some integer $k_0$, it holds that $|m|\ll k^{1-1/n}$. As \eqref{lesespi} holds for any $\epsilon>0$ for some integers $k$ and $m$ satisfying these constraints, one obtains that $$\liminf_{k\rightarrow\infty}\;\min_{1\le |m|\ll k^{1-1/n}}\; \sqrt[n]{k}\cdot\dsph\left(\bm{u}_{k+m}, \bm{u}_{k}\right)\; =\; 0,$$ which concludes the reasoning by contraposition.

For the converse implication, assume that the set $\widetilde{\mathfrak{S}}$ is uniformly discrete. Then, there exists $r>0$ such that for all integers $m, k$ with $m\neq 0$ and $m+k\ge 0$, it holds that $$\left\|\sqrt[n]{k+m}\bm{u}_{k+m}-\sqrt[n]{k}\bm{u}_k\right\|_2\;>\; r.$$ Applying~\eqref{elem2} once again, one can guarantee the existence of a constant $\alpha>0$ such that 
\begin{equation}\label{biggerr}
\left|\sqrt[n]{1+\frac{m}{k}}-1\right|+\sqrt[2n]{1+\frac{m}{k}}\cdot \dsph\left(\bm{u}_{m+k}, \bm{u}_k\right)\; >\;\frac{r\alpha}{\sqrt[n]{k}}\cdotp
\end{equation} 
If we now assume that $1\le |m|\le (r\alpha k^{1-1/n})/4$, then $$\left|\sqrt[n]{1+\frac{m}{k}}-1\right|\;\underset{\eqref{elem1}}{\le}\; \frac{r\alpha}{2\sqrt[n]{k}},$$ and~\eqref{biggerr} implies that $$\frac{r\alpha}{2\sqrt[n]{k}}\;<\; \sqrt[2n]{1+\frac{m}{k}}\cdot \dsph\left(\bm{u}_{m+k}, \bm{u}_k\right)\;\ll\; \dsph\left(\bm{u}_{m+k}, \bm{u}_k\right).$$ This shows that~\eqref{unifdiscr} holds with $\kappa=r\alpha/4$.
\end{proof}

\section{Lifting a Toral Sequence to the Sphere}\label{liftsection}

In practice, the density condition induced by Proposition~\ref{cnsreldens} and the separation condition induced by Proposition~\ref{cnsunifdiscr} are more naturally realised when the underlying space is the torus $\T^d$. Upon identifying the unit square $\K_d$ with $\T^d$ in the natural way, the following proposition provides a way to lift a well--seperated and well--dispersed toral sequence  to the sphere so that the lifted sequence still enjoys the same properties. 

\begin{prop}[Lifting a toral sequence to the sphere]\label{lifting}
Assume that $\tau_1, \tau_2~: \overline{\K}_d \longrightarrow \Sph^d$ are two maps which are bi--Lipschitz  to their images and which satisfy these conditions~:
\begin{equation}\label{condphi}
\tau_1\left(\K_d^{\circ}\right)\cap\tau_2\left(\K_d^{\circ}\right)=\emptyset, \qquad  \tau_1\left(\overline{\K}_d\right)\cup\tau_2\left(\overline{\K}_d\right)=\Sph^{d}\qquad \textrm{and}\qquad \tau_1\vert_{\partial\K_d}=\tau_2\vert_{\partial\K_d}
\end{equation}
(here, $\tau_i\vert_{\partial\K_d}$ denotes the restriction of the map $\tau_i$ to $\partial\K_d$).
Let $\left(\bm{x}_k\right)_{k\ge 1}$ be a sequence in $\K_d$ such that for some $\kappa>0$, the following separation condition holds~: 
\begin{equation}\label{unifdiscrbis}
\liminf_{k\rightarrow\infty}\;\min_{1\le |m|\le\kappa k^{1-1/n}}\; \sqrt[n]{k}\cdot\left\|\bm{x}_{k+m}- \bm{x}_{k}\right\|_2 \;>\; 0.
\end{equation}
Assume furthermore  that $\left(\bm{x}_k\right)_{k\ge 1}$ satisfies a density condition; namely, that there exists a partition of the natural integers into two sets $\mathcal{N}_1$ and $\mathcal{N}_2$ such that the following holds for some constants $c,C>0$~: for any integer $k\ge 1$ and any vector $\bm{v}\in\Sph^d$ such that $\bm{v}=\tau_i(\bm{y})$ for some $i\in\left\{1,2\right\}$ and some $\bm{y}\in\overline{\K}_d$, there exists an integer $m_i$ such that 
\begin{equation}\label{conddensx_n}
k+m_i\in\mathcal{N}_i, \qquad  \left|m_i\right|\le ck^{1-1/n} \qquad \textrm{and}\qquad \left\|\bm{x}_{k+m_i}-\bm{y}\right\|_2\;\le\; \frac{C}{\sqrt[n]{k}}\cdotp
\end{equation}
Then, the spherical sequence defined for all $k\ge 1$ by 
\begin{equation}\label{defuk}
\bm{u}_k\;=\; \left\{
    \begin{array}{lll}
        \tau_1\left(\bm{x}_k\right) & \mbox{if } & k\in\mathcal{N}_1,\\
        \tau_2\left(\bm{x}_k\right) & \mbox{if } & k\in\mathcal{N}_2
    \end{array}
\right.
\end{equation}
satisfies the density condition~\eqref{reldens} and the separation condition~\eqref{unifdiscr} (with possibly different values for the various constants).
\end{prop}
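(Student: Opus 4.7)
The proof amounts to transferring the toral density condition~\eqref{conddensx_n} and the separation condition~\eqref{unifdiscrbis} to the sphere through the two charts $\tau_1$ and $\tau_2$. The density half is the more direct one. Given $k\ge 1$ and $\bm{v}\in\Sph^d$, the covering condition in~\eqref{condphi} provides $i\in\{1,2\}$ and $\bm{y}\in\overline{\K}_d$ with $\bm{v}=\tau_i(\bm{y})$. The hypothesis~\eqref{conddensx_n} then yields an integer $m_i$ with $k+m_i\in\mathcal{N}_i$, $|m_i|\le ck^{1-1/n}$, and $\|\bm{x}_{k+m_i}-\bm{y}\|_2\le C/\sqrt[n]{k}$. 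By the definition~\eqref{defuk}, $\bm{u}_{k+m_i}=\tau_i(\bm{x}_{k+m_i})$, and the upper Lipschitz bound for $\tau_i$ combined with~\eqref{elem0} produces $\dsph(\bm{u}_{k+m_i},\bm{v})\ll 1/\sqrt[n]{k}$, which is~\eqref{reldens}.

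For the separation condition~\eqref{unifdiscr}, fix $k$ large and $m$ with $1\le |m|\le \kappa' k^{1-1/n}$ for some $\kappa'>0$ to be chosen. When $k$ and $k+m$ belong to the same partition class $\mathcal{N}_i$, both $\bm{u}_k$ and $\bm{u}_{k+m}$ are $\tau_i$--images; the lower Lipschitz bound for $\tau_i$ together with~\eqref{elem0} then reduces the desired lower bound directly to the toral separation~\eqref{unifdiscrbis}.

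The main obstacle lies in the case when $k$ and $k+m$ lie in different classes, say $k\in\mathcal{N}_1$ and $k+m\in\mathcal{N}_2$. The strategy is to exploit the topological structure of the decomposition $\Sph^d=\tau_1(\overline{\K}_d)\cup\tau_2(\overline{\K}_d)$. Injectivity of each $\tau_i$ (forced by the bi--Lipschitz assumption) together with the boundary agreement condition in~\eqref{condphi} ensures that the two closed regions $A_i=\tau_i(\overline{\K}_d)$ meet precisely along the common boundary $B=\tau_1(\partial\K_d)=\tau_2(\partial\K_d)$. A minimizing geodesic from $\bm{u}_k\in A_1$ to $\bm{u}_{k+m}\in A_2$ is continuous and hence, by connectedness, must intersect $B$ at some point $\bm{b}=\tau_1(\bm{y})=\tau_2(\bm{y})$ with $\bm{y}\in\partial\K_d$. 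Since $\bm{b}$ lies on this geodesic, the spherical distances split additively, and invoking the lower Lipschitz bounds of $\tau_1$ and $\tau_2$ on either side of $\bm{b}$ yields
$$\dsph(\bm{u}_k,\bm{u}_{k+m})=\dsph\!\left(\tau_1(\bm{x}_k),\tau_1(\bm{y})\right)+\dsph\!\left(\tau_2(\bm{y}),\tau_2(\bm{x}_{k+m})\right)\gg \|\bm{x}_k-\bm{y}\|_2+\|\bm{x}_{k+m}-\bm{y}\|_2.$$
Since the right-hand side is bounded below by $\|\bm{x}_k-\bm{x}_{k+m}\|_2$ via the triangle inequality, the toral separation condition~\eqref{unifdiscrbis} closes the argument, provided $\kappa'>0$ is chosen small enough to absorb the bi--Lipschitz constants of $\tau_1,\tau_2$.
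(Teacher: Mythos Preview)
Your proof is correct and follows essentially the same route as the paper's: the density half is identical, and for the separation half you split into the same two cases and, when $k$ and $k+m$ lie in different classes, locate a point $\bm{b}=\tau_1(\bm{y})=\tau_2(\bm{y})$ on the minimizing geodesic to split the spherical distance additively before applying the lower Lipschitz bounds and the triangle inequality. The only cosmetic differences are that the paper finds the crossing point via the Intermediate Value Theorem applied to $t\mapsto \textrm{dist}(\bm{v}(t),\tau_1(\K_d^\circ))-\textrm{dist}(\bm{v}(t),\tau_2(\K_d^\circ))$ rather than by a connectedness argument, and that your final remark about shrinking $\kappa'$ is unnecessary (the same $\kappa$ works; the bi--Lipschitz constants are absorbed into the value of the $\liminf$, not into the range of $m$).
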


It should be noted that any circle sequence $\left(\bm{u}_k\right)_{k\ge 1}=\left(e(y_k)\right)_{k\ge 1}$, where $\left(y_k\right)_{k\ge 1}\in\R^\N$, can always be realised as the lift of a one--dimensional toral sequence (in the sense of the above proposition). To see this, notice that $\left(\bm{u}_k\right)_{k\ge 1}$ is also the sequence defined by~\eqref{defuk} when setting 
\begin{equation*}
\bm{x}_k\;=\; \left\{
    \begin{array}{lll}
        2\left\{y_k\right\} & \mbox{if } & 0\le \left\{y_k\right\} < 1/2,\\
        2-2\left\{y_k\right\} & \mbox{if } &  1/2\le \left\{y_k\right\}  <1,
    \end{array}
\right.
\end{equation*}

$$\mathcal{N}_1=\left\{k\ge 1 : 0\le \left\{y_k\right\}< \frac{1}{2}\right\}, \; \qquad \; \mathcal{N}_2=\left\{k\ge 1 :  \frac{1}{2}\le \left\{y_k\right\} <1 \right\}$$ and $$\tau_1(x)=\tau_2(-x)=e(x/2).$$ 

The effect of this lift is more apparent in the case that the sequence $\left(y_k\right)_{k\ge 1}$ arises from a linear toral flow such as when considering the sunflower spiral (cf.~Figure~\ref{sunflower}). Then, $\left(y_k\right)_{k\ge 1}=\left(\varphi k\right)_{k\ge 1}$ (recall that $\varphi$ stands for the golden ratio) and the sequence $\left(x_k\right)_{k\ge 1}$ above is just the discrete one--dimensional billiard $\left(2\left\|\varphi k\right\|\right)_{k\ge 1}$ in the unit interval $[0,1)$. The lift  then  maps this discrete billiard onto the northern or the southern hemisphere of the circle depending on the direction of the continuous flow $\left\{2\left\|\varphi t\right\|\right\}_{t\ge 0}$ in $[0,1)$. This is represented in Figure~\ref{Sunflowerbis?} where, for the sake of illustration and without loss of generality, the interval  $[0,1)$ is mapped onto the interval $[-1,1)$ so as to become the diameter  of the unit circle.

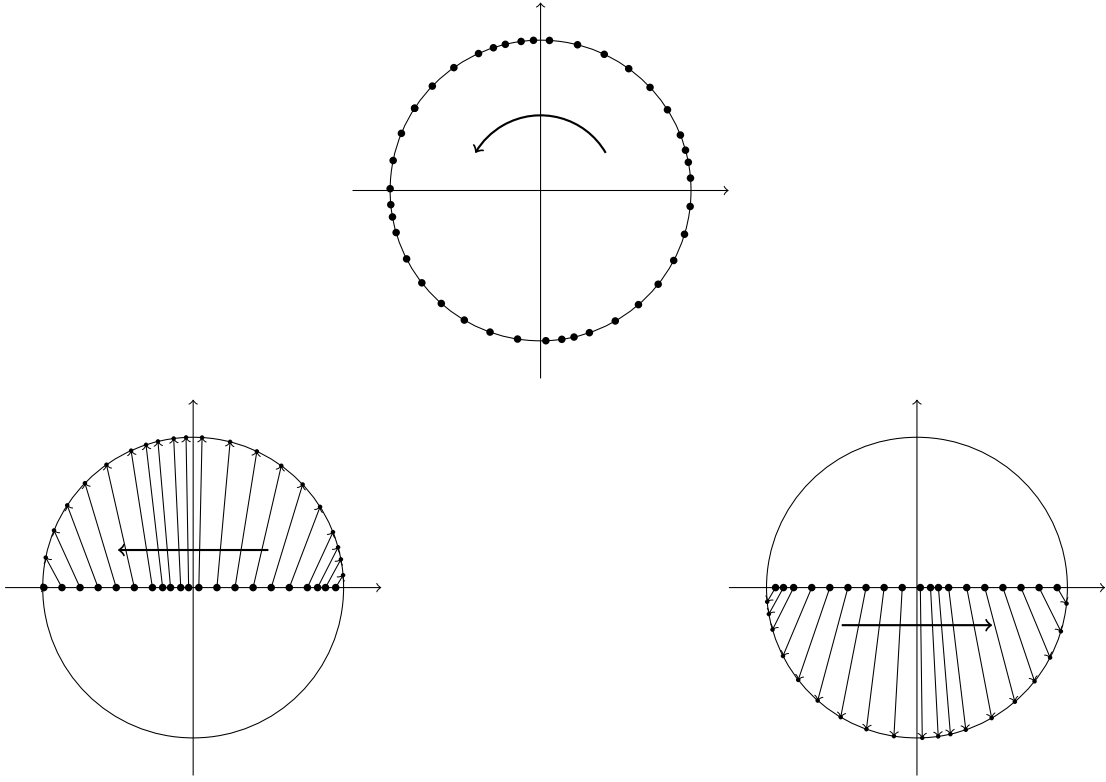
\begin{figure}[h!]

\begin{subfigure}{0\textwidth}
  \centering
\begin{tikzpicture}
\draw[] (0,0) circle (2) ;

\fill (2*-0.0472200962543599, 2*0.998884509094885) circle (0.05) ;
\fill (2*-0.995540525019458, 2*-0.094334845332899) circle (0.05) ;
\fill (2*0.141239135087429, 2*-0.989975508141365) circle (0.05) ;
\fill (2*0.982201873912036, 2*0.187828322900687) circle (0.05) ;
\fill (2*-0.233998469142107, 2*0.972236965168035) circle (0.05) ;
\fill (2*-0.96010301343951, 2*-0.27964656905525) circle (0.05) ;
\fill (2*0.324670782559537, 2*-0.945827089352054) circle (0.05) ;
\fill (2*0.92944104223263, 2*0.36897066145362) circle (0.05) ;
\fill (2*-0.412447373513491, 2*0.910981429054304) circle (0.05) ;
\fill (2*-0.8904894328783, 2*-0.455003922985377) circle (0.05) ;
\fill (2*0.4965453669815, 2*-0.868010770975342) circle (0.05) ;
\fill (2*0.843595592831255, 2*0.536979027295931) circle (0.05) ;
\fill (2*-0.576214697167989, 2*0.817298368264371) circle (0.05) ;
\fill (2*-0.789177765904356, 2*-0.61416484253188) circle (0.05) ;
\fill (2*0.6507447973036, 2*-0.75929652230357) circle (0.05) ;
\fill (2*0.727721301972956, 2*0.685872952269431) circle (0.05) ;
\fill (2*-0.71947093715462, 2*0.694522548654724) circle (0.05) ;
\fill (2*-0.659774328163645, 2*-0.75146379546603) circle (0.05) ;
\fill (2*0.781780151718708, 2*-0.623554163147576) circle (0.05) ;
\fill (2*0.585942860135834, 2*0.810352370673301) circle (0.05) ;
\fill (2*-0.837116708229044, 2*0.547024329261295) circle (0.05) ;
\fill (2*-0.837116708229044, 2*0.547024329261295) circle (0.05) ;
\fill (2*-0.506885397058414, 2*-0.862013453635692) circle (0.05) ;
\fill (2*0.884987062707102, 2*-0.465615612754831) circle (0.05) ;
\fill (2*0.423307048488632, 2*0.905986281739322) circle (0.05) ;
\fill (2*-0.924964261856665, 2*0.380054093897113) circle (0.05) ;
\fill (2*-0.335953245535206, 2*-0.941878663530691) circle (0.05) ;
\fill (2*0.956691751038939, 2*-0.291102891593416) circle (0.05) ;
\fill (2*0.245603092395581, 2*0.96937047665262) circle (0.05) ;
\fill (2*-0.979886554365514, 2*0.199555357166076) circle (0.05) ;
\fill (2*-0.153062417564592, 2*-0.988216522999531) circle (0.05) ;
\fill (2*0.994341798546165, 2*-0.106227998493699) circle (0.05) ;
\fill (2*0.0591565866904294, 2*0.998248715627091) circle (0.05) ;
\fill (2*-0.999928557981368, 2*0.011953197618298) circle (0.05) ;
\fill (2*0.035276858820294, 2*-0.999377577911258) circle (0.05) ;
\fill (2*0.996597004643276, 2*0.0824282132285429) circle (0.05) ;
\fill (2*-0.129395671792422, 2*0.991593041585805) circle (0.05) ;
\fill (2*-0.984376852489405, 2*-0.17607445096621) circle (0.05) ;
\fill (2*0.222360411242653, 2*-0.974964536540688) circle (0.05) ;
\fill (2*0.96337709244533, 2*0.268150289486291) circle (0.05) ;
\fill (2*-0.313341929311677, 2*0.949640371580335) circle (0.05) ;

\draw[thin,->] (0,-2.5) -- (0,2.5);
\draw[thin,->] (-2.5,0) -- (2.5,0);

\draw[thick, ->] (0.86602540378,0.5) arc (30:150:1);
\end{tikzpicture}
\end{subfigure}
\newline
\vspace{2mm}

\begin{subfigure}{.33\textwidth}
	\centering
\begin{tikzpicture}
\draw (0,0) circle (2) ;

\fill (2*-0.0472200962543599, 2*0.998884509094885) circle (0.03) ;
\draw[->] (2*-0.030072431,0) -- (2*-0.0472200962543599, 2*0.998884509094885);
\fill (2*-0.030072431,0) circle (0.05) ;

\fill (2*0.982201873912036, 2*0.187828322900687) circle (0.03) ;
\draw[->] (2*0.879710281,0) -- (2*0.982201873912036, 2*0.187828322900687);
\fill (2*0.879710281,0) circle (0.05) ;

\fill (2*-0.233998469142107, 2*0.972236965168035) circle (0.03) ;
\draw[->] (2*-0.150362149,0) -- (2*-0.233998469142107, 2*0.972236965168035);
\fill (2*-0.150362149,0) circle (0.05) ;

\fill (2*0.92944104223263, 2*0.36897066145362) circle (0.03) ;
\draw[->] (2*0.759420563,0) -- (2*0.92944104223263, 2*0.36897066145362);
\fill (2*0.759420563,0) circle (0.05) ;

\fill (2*-0.412447373513491, 2*0.910981429054304) circle (0.03) ;
\draw[->] (2*-0.270651868,0) -- (2*-0.412447373513491, 2*0.910981429054304);
\fill (2*-0.270651868,0) circle (0.05) ;

\fill (2*0.843595592831255, 2*0.536979027295931) circle (0.03) ;
\draw[->] (2*0.639130844,0) -- (2*0.843595592831255, 2*0.536979027295931);
\fill (2*0.639130844,0) circle (0.05) ;

\fill (2*-0.576214697167989, 2*0.817298368264371) circle (0.03) ;
\draw[->] (2*-0.390941586,0) -- (2*-0.576214697167989, 2*0.817298368264371);
\fill (2*-0.390941586,0) circle (0.05) ;

\fill (2*0.727721301972956, 2*0.685872952269431) circle (0.03) ;
\draw[->] (2*0.518841126, 0) -- (2*0.727721301972956, 2*0.685872952269431);
\fill (2*0.518841126, 0)  circle (0.05) ;

\fill (2*-0.71947093715462, 2*0.694522548654724) circle (0.03) ;
\draw[->] (2*-0.511231305, 0) -- (2*-0.71947093715462, 2*0.694522548654724);
\fill (2*-0.511231305, 0) circle (0.05) ;

\fill (2*0.585942860135834, 2*0.810352370673301) circle (0.03) ;
\draw[->] (2*0.398551407, 0) -- (2*0.585942860135834, 2*0.810352370673301);
\fill (2*0.398551407, 0) circle (0.05) ;

\fill (2*-0.837116708229044, 2*0.547024329261295) circle (0.03) ;
\draw[->] (2*-0.631521023, 0) -- (2*-0.837116708229044, 2*0.547024329261295) ;
\fill (2*-0.631521023, 0) circle (0.05) ;

\fill (2*0.423307048488632, 2*0.905986281739322) circle (0.03) ;
\draw[->] (2*0.278261689, 0) -- (2*0.423307048488632, 2*0.905986281739322);
\fill (2*0.278261689, 0) circle (0.05) ;

\fill (2*-0.924964261856665, 2*0.380054093897113) circle (0.03) ;
\draw[->] (2*-0.751810742, 0) -- (2*-0.924964261856665, 2*0.380054093897113);
\fill (2*-0.751810742, 0) circle (0.05) ;

\fill (2*0.245603092395581, 2*0.96937047665262) circle (0.03) ;
\draw[->] (2*0.15797197, 0) -- (2*0.245603092395581, 2*0.96937047665262) ;
\fill (2*0.15797197, 0)  circle (0.05) ;

\fill (2*-0.979886554365514, 2*0.199555357166076) circle (0.03) ;
\draw[->] (2*-0.872100461, 0) -- (2*-0.979886554365514, 2*0.199555357166076);
\fill (2*-0.872100461, 0) circle (0.05) ;

\fill (2*0.0591565866904294, 2*0.998248715627091) circle (0.03) ;
\draw[->] (2*0.037682252, 0) -- (2*0.0591565866904294, 2*0.998248715627091) ;
\fill (2*0.037682252, 0) circle (0.05) ;

\fill (2*-0.999928557981368, 2*0.011953197618298) circle (0.03) ;
\draw[->] (2*-0.992390179, 0) -- (2*-0.999928557981368, 2*0.011953197618298);
\fill (2*-0.992390179, 0) circle (0.05) ;

\fill (2*0.996597004643276, 2*0.0824282132285429) circle (0.03) ;
\draw[->] (2*0.947464964, 0) -- (2*0.996597004643276, 2*0.0824282132285429) ;
\fill (2*0.947464964, 0) circle (0.05) ;

\fill (2*-0.129395671792422, 2*0.991593041585805) circle (0.03) ;
\draw[->] (2*-0.082607467, 0) -- (2*-0.129395671792422, 2*0.991593041585805);
\fill (2*-0.082607467, 0) circle (0.05) ;

\fill (2*0.96337709244533, 2*0.268150289486291) circle (0.03) ;
\draw[->] (2*0.827175245, 0) -- (2*0.96337709244533, 2*0.268150289486291);
\fill (2*0.827175245, 0) circle (0.05) ;

\fill (2*-0.313341929311677, 2*0.949640371580335) circle (0.03) ;
\draw[->] (2*-0.202897185, 0) -- (2*-0.313341929311677, 2*0.949640371580335);
\fill (2*-0.202897185, 0) circle (0.05) ;

\draw[thin,->] (0,-2.5) -- (0,2.5);
\draw[thin,->] (-2.5,0) -- (2.5,0);
\draw[thick,->] (1,0.5) -- (-1,0.5);

\end{tikzpicture}
\end{subfigure}
\hspace{40mm}
\begin{subfigure}{.33\textwidth}
	\centering
\begin{tikzpicture}
\draw (0,0) circle (2) ;

\fill (2*-0.995540525019458, 2*-0.094334845332899) circle (0.03) ;
\draw[->] (2*-0.939855143,0) -- (2*-0.995540525019458, 2*-0.094334845332899);
\fill (2*-0.939855143,0)  circle (0.05) ;

\fill (2*0.141239135087429, 2*-0.989975508141365) circle (0.03) ;
\draw[->] (2*0.090217288,0) --  (2*0.141239135087429, 2*-0.989975508141365);
\fill (2*0.090217288,0) circle (0.05) ;

\fill (2*-0.96010301343951, 2*-0.27964656905525) circle (0.03) ;
\draw[->] (2*-0.819565424, 0) -- (2*-0.96010301343951, 2*-0.27964656905525);
\fill (2*-0.819565424, 0) circle (0.05) ;

\fill (2*0.324670782559537, 2*-0.945827089352054) circle (0.03) ;
\draw[->] (2*0.210507006,0) -- (2*0.324670782559537, 2*-0.945827089352054);
\fill (2*0.210507006,0) circle (0.05) ;

\fill (2*-0.8904894328783, 2*-0.455003922985377) circle (0.03) ;
\draw[->] (2*-0.699275706,0) -- (2*-0.8904894328783, 2*-0.455003922985377);
\fill (2*-0.699275706,0) circle (0.05) ;

\fill (2*0.4965453669815, 2*-0.868010770975342) circle (0.03) ;
\draw[->] (2*0.330796725,0) --  (2*0.4965453669815, 2*-0.868010770975342);
\fill (2*0.330796725,0) circle (0.05) ;

\fill (2*-0.789177765904356, 2*-0.61416484253188) circle (0.03) ;
\draw[->] (2*-0.578985987,0) -- (2*-0.789177765904356, 2*-0.61416484253188);
\fill (2*-0.578985987,0)  circle (0.05) ;

\fill (2*0.6507447973036, 2*-0.75929652230357) circle (0.03) ;
\draw[->] (2*0.451086443,0) -- ((2*0.6507447973036, 2*-0.75929652230357);
\fill (2*0.451086443,0) circle (0.05) ;

\fill (2*-0.659774328163645, 2*-0.75146379546603) circle (0.03) ;
\draw[->] (2*-0.458696269, 0) -- (2*-0.659774328163645, 2*-0.75146379546603);
\fill (2*-0.458696269, 0) circle (0.05) ;

\fill (2*0.781780151718708, 2*-0.623554163147576) circle (0.03) ;
\draw[->] (2*0.571376162,0) -- (2*0.781780151718708, 2*-0.623554163147576);
\fill (2*0.571376162,0) circle (0.05) ;

\fill (2*-0.506885397058414, 2*-0.862013453635692) circle (0.03) ;
\draw[->] (2*-0.33840655,0) -- (2*-0.506885397058414, 2*-0.862013453635692);
\fill (2*-0.33840655,0) circle (0.05) ;

\fill (2*0.884987062707102, 2*-0.465615612754831) circle (0.03) ;
\draw[->] (2*0.69166588,0) --  (2*0.884987062707102, 2*-0.465615612754831);
\fill (2*0.69166588,0) circle (0.05) ;

\fill (2*-0.335953245535206, 2*-0.941878663530691) circle (0.03) ;
\draw[->] (2*-0.218116832,0) -- (2*-0.335953245535206, 2*-0.941878663530691);
\fill (2*-0.218116832,0) circle (0.05) ;

\fill (2*0.956691751038939, 2*-0.291102891593416) circle (0.03) ;
\draw[->] (2*0.811955599,0) -- (2*0.956691751038939, 2*-0.291102891593416);
\fill (2*0.811955599,0) circle (0.05) ;

\fill (2*-0.153062417564592, 2*-0.988216522999531) circle (0.03) ;
\draw[->] (2*-0.097827113,0) -- (2*-0.153062417564592, 2*-0.988216522999531);
\fill (2*-0.097827113,0) circle (0.05) ;

\fill (2*0.994341798546165, 2*-0.106227998493699) circle (0.03) ;
\draw[->] (2*0.932245318,0) -- (2*0.994341798546165, 2*-0.106227998493699);
\fill (2*0.932245318,0) circle (0.05) ;

\fill (2*0.035276858820294, 2*-0.999377577911258) circle (0.03) ;
\draw[->] (2*0.022462605,0) -- (2*0.035276858820294, 2*-0.999377577911258);
\fill (2*0.022462605,0) circle (0.05) ;

\fill (2*-0.984376852489405, 2*-0.17607445096621) circle (0.03) ;
\draw[->] (2*-0.887320107,0) -- (2*-0.984376852489405, 2*-0.17607445096621);
\fill (2*-0.887320107,0) circle (0.05) ;

\fill (2*0.222360411242653, 2*-0.974964536540688) circle (0.03) ;
\draw[->] (2*0.142752324,0) -- (2*0.222360411242653, 2*-0.974964536540688);
\fill (2*0.142752324,0) circle (0.05) ;

\draw[thin,->] (0,-2.5) -- (0,2.5);
\draw[thin,->] (-2.5,0) -- (2.5,0);
\draw[thick,->] (-1,-0.5) -- (1,-0.5);

\end{tikzpicture}
\end{subfigure}
\caption{The subsequence $\left\{e(n\varphi)\right\}_{1\le n\le 40}$ of the circle rotation obtained from  the Golden Ratio  $\varphi=(1+\sqrt{5})/2$ is represented at the top. It can be obtained as the lift of the discrete one--dimensional billiard  $\left(-4\left\|\varphi k\right\|+1\right)_{k\ge 1}$  in the interval $[-1, 1)$ (bottom pictures)~: each time the billiard flow $\left\{-4\left\|\varphi t\right\|+1\right\}_{t\ge 0}$  ``hits'' an end of the interval, the hemisphere onto which the lifting operates is changed.\\}
\label{Sunflowerbis?}
\end{figure}

\begin{proof}[Proof of Proposition~\ref{lifting}] For the sake of simplicity of notation, set $$\tau\;=\;\tau_1\vert_{\partial\K_d}\;=\;\tau_2\vert_{\partial\K_d}.$$ The goal is to show that the sequence $\left(\bm{u}_k\right)_{k\ge 1}$ defined in~\eqref{defuk} satisfies both conditions~\eqref{reldens} and~\eqref{unifdiscr}. These will be established separately.

\begin{proof}[Proof that the density condition~\eqref{reldens} holds.] Let $k\ge 1$ and $\bm{v}\in\Sph^d$. Assume that $\bm{v}=\tau_i\left(\bm{y}\right)$ for some $\bm{y}\in\overline{\K}_d$ and some $i\in\{1,2\}$. Then, by assumption, there exists an integer $m_i$ such that $\left|m_i\right|\le ck^{1-1/n}$, $k+m_i\in\mathcal{N}_i$ and $\left\|\bm{x}_{k+m_i}-\bm{y}\right\|_2\le Ck^{-1/n}$. Since $\tau_i$ is Lipschitz, this implies that $$\dsph\left(\bm{u}_{k+m_i},\bm{v}\right)\;=\; \dsph\left(\tau_i\left(\bm{x}_{k+m_i}\right), \tau_i\left(\bm{y}\right)\right)\;\ll\; \left\|\bm{x}_{k+m_i} - \bm{y}\right\|_2 \;\le\;\frac{C}{\sqrt[n]{k}},$$ thereby establishing the required density conditon.

\end{proof}

\begin{proof}[Proof that the separation condition~\eqref{unifdiscr} holds.] This part is  more involved. Let $k$ be a large enough integer and let $m$ be an integer such that $1\le \left|m\right|\le \kappa k^{1-1/n}$. Let $i,j\in\left\{1,2\right\}$ be indices such that $\bm{u}_k=\tau_i\left(\bm{x}_k\right)$ and $\bm{u}_{k+m}=\tau_j\left(\bm{x}_{k+m}\right)$. 

The case when $i=j$ can be easily dealt with~: since $\tau_i$ is bi--Lipschitz, one obtains that $$\dsph\left(\bm{u}_{k+m}, \bm{u}_k\right)\;=\; \dsph\left(\tau_i\left(\bm{x}_{k+m}\right),\tau_i\left(\bm{x}_{k}\right)\right) \;\gg\; \left\|\bm{x}_{k+m}-\bm{x}_{k}\right\|_2 \;\underset{\eqref{unifdiscrbis}}{\gg}\; \frac{1}{\sqrt[n]{k}}, $$ whence the conclusion.

Assume now that $i\neq j$. Since $\tau_i$ and $\tau_j$ coincide on the boundary of $\K_d$, in view of the previous case, one can furthermore assume without loss of generality that $\bm{x}_k\in\K_d^\circ$ and $\bm{x}_{k+m}\in\K_d^\circ$.  Then, $\bm{u}_k\in\tau_i\left(\K_d^\circ\right)$ and $\bm{u}_{k+m}\in\tau_j\left(\K_d^\circ\right)$. Consider the geodesic arc $t\in[0,1]\mapsto \bm{v}(t)$ joining $\bm{v}(0)=\bm{u}_k$ to $\bm{v}(1)=\bm{u}_{k+m}$ and note that assumption~\eqref{condphi} implies that the decomposition $$\Sph^d\;=\; \tau_1\left(\K_d^\circ\right)\cup \tau_2\left(\K_d^\circ\right)\cup \tau\left(\partial\K_d\right)$$ holds, where the unions are pairwise disjoint. The Mean Value Theorem applied to the continuous map $t\in[0,1]\mapsto\textrm{dist}\left(\bm{v}(t), \tau_1\left(\K_d^\circ\right)\right)- \textrm{dist}\left(\bm{v}(t), \tau_2\left(\K_d^\circ\right)\right)$ then implies the existence of a value $t_0\in (0,1)$ and of a vector $\bm{z}\in\partial\K_d$ such that $\bm{v}\left(t_0\right)=\tau(\bm{z})$. 

Therefore, given that the maps $\tau_i $ and $\tau_j$ are bi--Lipschitz, one gets
\begin{align*}
\dsph\left(\bm{u}_{k+m}, \bm{u}_k\right)\; & =\; \dsph\left(\tau_j\left(\bm{x}_{k+m}\right),\tau_i\left(\bm{x}_{k}\right)\right)\; =\;  \dsph\left(\tau_j\left(\bm{x}_{k+m}\right),\tau\left(\bm{z}\right)\right)+\dsph\left(\tau_j\left(\bm{x}_{k}\right),\tau\left(\bm{z}\right)\right)\\
&\gg \;  \left\|\bm{x}_{k+m}-\bm{z}\right\|_2 + \left\|\bm{x}_{k}-\bm{z}\right\|_2 \;\ge\;  \left\|\bm{x}_{k+m}-\bm{x}_{k}\right\|_2 \\
&\underset{\eqref{unifdiscrbis}}{\gg}\; \frac{1}{\sqrt[n]{k}},
\end{align*}
whence the conclusion in this case also.
\end{proof}

This completes the proof of Proposition~\ref{lifting}.
\end{proof}

We conclude this section by noticing that one can find in any dimension $d\ge 1$ bi--Lipschitz maps $\tau_1$ and $\tau_2$ satisfying  assumptions~\eqref{condphi} in Proposition~\ref{lifting}. 

Indeed, denote by $\mathfrak{s}$ the contracting map characterized by its action in each direction in the following way~: $\mathfrak{s}$  maps a vector on the boundary of the hypercube $\K'_d:=[-1, 1)^d$ to the vector in the same direction normalised so as to lie on the unit sphere $\Sph^{d-1}$. Explicitly, if $\bm{x}=\rho\bm{u}$ with $\rho\ge 0$ and $\bm{u}\in\Sph^{d-1}$, then $$\mathfrak{s}(\bm{x})\:=\: \rho\left\|\bm{u}\right\|_{\infty}\bm{u}.$$ Working in polar coordinates with relations~\eqref{elem0} and~\eqref{elem3}, it is easily seen that $\mathfrak{s}$ is bi--Lipschitz between $\K'_d$ and its image (the unit ball in $\R^{d-1}$).

Let also $N=\left(\bm{0},1\right)\in\R^n$  and $S=\left(\bm{0}, -1\right)\in\R^n$ stand  respectively for the ``North Pole'' and the ``South Pole'' in $\Sph^d$. The stereographic projections with respect to $N$ and $S$ are denoted by $\mathfrak{p}_N$ and $\mathfrak{p}_S$, respectively. Explicitly, 
\begin{align*}
\mathfrak{p}_N\; :\; \bm{x}\in\R^{d}\;\mapsto \; \mathfrak{p}_N\left(\bm{x}\right)=\left(\frac{2\bm{x}}{1+\left\|\bm{x}\right\|_2^2}, \; \frac{\left\|\bm{x}\right\|_2^2-1}{1+\left\|\bm{x}\right\|_2^2}\right)\in \Sph^{d}\backslash \{ N \}
\end{align*}
and
\begin{align*}
\mathfrak{p}_S\; :\; \bm{x}\in\R^{d}\;\mapsto\; \mathfrak{p}_S\left(\bm{x}\right)\;=\; \left(\frac{2\bm{x}}{1+\left\|\bm{x}\right\|_2^2}, \; \frac{1-\left\|\bm{x}\right\|_2^2}{1+\left\|\bm{x}\right\|_2^2}\right) \in \Sph^{d}\backslash\{S\}.
\end{align*}

Finally, given $\bm{x}\in\K_d$, set $$\tau_1(\bm{x})=\left(\mathfrak{p}_N\circ\mathfrak{s}\right)\left(2\bm{x}-\bm{1}\right)\qquad\textrm{ and }\qquad \tau_2\left(\bm{x}\right)=\left(\mathfrak{p}_S\circ\mathfrak{s}\right)\left(2\bm{x}-\bm{1}\right)$$ (here, the effect of the transformation $\bm{x}\mapsto 2\bm{x}-\bm{1}$ is to map $\K_d$ onto $\K'_d$). Since a stereographic projection is bi--Lipschitz between a compact set and its image, $\tau_1$ and $\tau_2$ are bi--Lipschitz. Furthermore, $\tau_1$ and $\tau_2$  map the interior of $\K_d$ to the southern and the northern hemispheres of $\Sph^d$, respectively. In other words, with obvious notation,  $$\tau_1\left(\K_d^{\circ}\right)=\Sph^d\cap\left\{x_n<0\right\} \qquad\textrm{ and }\qquad \tau_2\left(\K_d^{\circ}\right)=\Sph^d\cap\left\{x_n>0\right\}.$$ It also holds that  $\tau_1$ and $\tau_2$ coincide when restricted to $\partial\K_d$, and the image of this boundary by either of these maps is the equator $\Sph^d\cap\left\{x_n=0\right\}$. This shows that all conditions in~\eqref{condphi} are indeed satisfied.

The lifting induced by the maps $\tau_1$ and $\tau_2$ above is described in Figure~\ref{projstretchedbilliard} when $d=2$ (i.e.~in $\R^3$).  We have made no attempt to minimize the distorsion caused by such lifting when defining our maps $\tau_1$ and $\tau_2$ (and it is indeed not hard to see that one can easily do better in this respect with the same ideas).

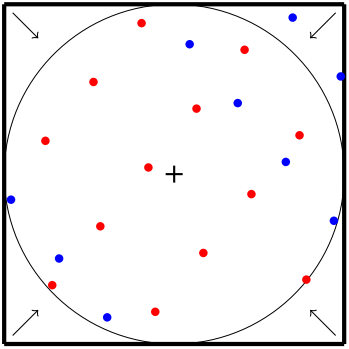
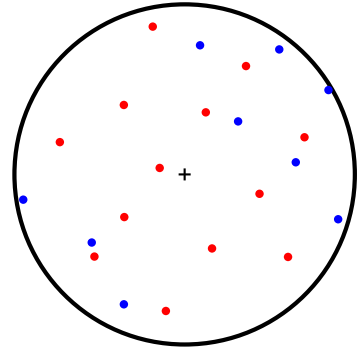
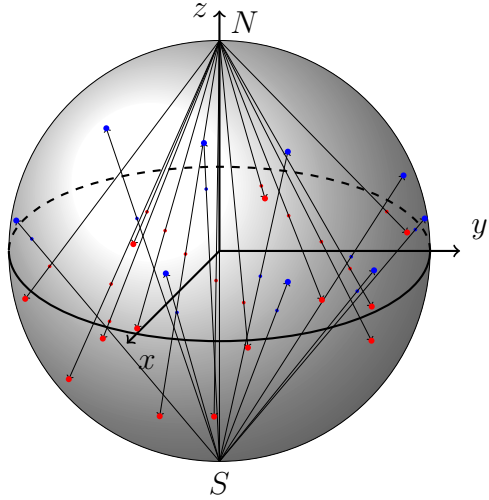
\begin{figure}[h!]
\begin{subfigure}{0.33\textwidth}
\centering

\begin{tikzpicture}[scale=1.13]
\draw[ultra thin] (2,2) circle (2);

\draw[ultra thick] (0,0) -- (0,4);
\draw[ultra thick] (4,0) -- (4,4);
\draw[ultra thick] (0,4) -- (4,4);
\draw[ultra thick] (0,0) -- (4,0);

\fill[color=red] (4*0.14142135623,4*0.17320508075) circle (0.05) ;
\fill[color=red] (8*0.14142135623,8*0.17320508075) circle (0.05) ;
\fill[color=red] (12*0.14142135623,12*0.17320508075) circle (0.05) ;
\fill[color=red] (16*0.14142135623,16*0.17320508075) circle (0.05) ;
\fill[color=red] (20*0.14142135623,20*0.17320508075) circle (0.05) ;
\fill[color=blue]  (24*0.14142135623,8-24*0.1732050807) circle (0.05) ;
\fill[color=blue]  (28*0.14142135623,8-28*0.1732050807) circle (0.05) ;
\fill[color=red] (8-32*0.14142135623,8-32*0.1732050807) circle (0.05) ;
\fill[color=red] (8-36*0.14142135623,8-36*0.1732050807) circle (0.05) ;
\fill[color=red] (8-40*0.14142135623,8-40*0.1732050807) circle (0.05) ;
\fill[color=red] (8-44*0.14142135623,8-44*0.1732050807) circle (0.05) ;
\fill[color=blue] (8-48*0.14142135623,-8+48*0.1732050807) circle (0.05) ;
\fill[color=blue]  (8-52*0.14142135623,-8+52*0.1732050807) circle (0.05) ;
\fill[color=blue]  (8-56*0.14142135623,-8+56*0.1732050807) circle (0.05) ;
\fill[color=red] (-8+60*0.14142135623,-8+60*0.1732050807) circle (0.05) ;
\fill[color=red] (-8+64*0.14142135623,-8+64*0.1732050807) circle (0.05) ;
\fill[color=red] (-8+68*0.14142135623,-8+68*0.1732050807) circle (0.05) ;
\fill[color=blue]  (-8+72*0.14142135623,16-72*0.1732050807) circle (0.05) ;
\fill[color=blue]  (-8+76*0.14142135623,16-76*0.1732050807) circle (0.05) ;
\fill[color=blue]  (-8+80*0.14142135623,16-80*0.1732050807) circle (0.05) ;
\fill[color=blue]  (-8+84*0.14142135623,16-84*0.1732050807) circle (0.05) ;
\fill[color=red] (16-88*0.14142135623,16-88*0.1732050807) circle (0.05) ;

\draw[thick]  (2,1.9) -- (2,2.1) ;
\draw[thick]  (1.9,2) -- (2.1,2) ;

\draw[->]  (0.1,0.1) -- (0.4,0.4) ;
\draw[->]  (3.9,3.9) -- (3.6,3.6) ;
\draw[->]  (3.9,0.1) -- (3.6,0.4) ;
\draw[->]  (0.1,3.9) -- (0.4,3.6) ;

\end{tikzpicture}
\caption{Two sets of points $\mathcal{N}_1$ (blue) and $\mathcal{N}_2$ (red) lie inside the square $\K'_2=[-1, 1)^2$.}
\label{billiard}
\end{subfigure}%
\hspace{50mm}
\begin{subfigure}{0.33\textwidth}
\centering
\vspace{8ex}

\begin{tikzpicture}[scale=0.8]
\draw[ultra thick] (2,2) circle (2.82842712475);

\fill[color=red] (0.500781305819615, 0.633669180764079) circle (0.07) ;
\fill[color=red] (0.997072811544766, 1.29065505170429) circle (0.07) ;
\fill[color=red] (1.58525727921859, 2.10741637157999) circle (0.07) ;
\fill[color=red] (2.35172467643424, 3.03249184731193) circle (0.07) ;
\fill[color=red] (3.0196617569812, 3.80207574187137) circle (0.07) ;

\fill[color=blue] (3.57241128420321, 4.0787968268709) circle (0.07) ;
\fill[color=blue] (4.39027931028539, 3.40291875034153) circle (0.07) ;

\fill[color=red] (3.9916428980629, 2.61786485392965) circle (0.07) ;
\fill[color=red] (3.24422816294235, 1.6777508875694) circle (0.07) ;
\fill[color=red] (2.45517305995988, 0.768764864439076) circle (0.07) ;
\fill[color=red] (1.68820581202952, -0.271171303280383) circle (0.07) ;

\fill[color=blue] (0.990171338200502, -0.160206249181023) circle (0.07) ;
\fill[color=blue] (0.456215898782451, 0.867357928290139) circle (0.07) ;
\fill[color=blue] (-0.682051246039307, 1.58012105609829) circle (0.07) ;

\fill[color=red] (-0.0737136049039098, 2.537081854051) circle (0.07) ;
\fill[color=red] (0.989731803244405, 3.15514129749943) circle (0.07) ;
\fill[color=red] (1.47004299279698, 4.45791087801629) circle (0.07) ;

\fill[color=blue] (2.25605067650332, 4.14745255229198) circle (0.07) ;
\fill[color=blue] (2.88852688283596, 2.88170384381359) circle (0.07) ;
\fill[color=blue] (3.84686455826186, 2.20186961379317) circle (0.07) ;
\fill[color=blue] (4.55115894902215, 1.2544592198622) circle (0.07) ;

\fill[color=red] (3.71814054874003, 0.627575313062749) circle (0.07) ;

\draw[thick]  (2,1.9) -- (2,2.1) ;
\draw[thick]  (1.9,2) -- (2.1,2) ;

\end{tikzpicture}
\caption{The square $\K'_2$ and the points inside it are radially contracted to the unit disk in $\R^{2}$.}\vspace{8ex}
\label{stretched_billiard}
\end{subfigure}%
\newline
\begin{subfigure}{0.8\textwidth}
\centering
\begin{tikzpicture}[scale=0.8]

\draw [ball color=white,very thin] (0,0,0) circle (3.5) ;

\draw[->, thick] (0,0,0)--(0,4,0) node[anchor=east] {$z$};
\draw[->, thick] (0,0,0)--(4,0,0) node[anchor=south west] {$y$};
\draw[->, thick] (0,0,0)--(0,0,4) node[anchor=north west] {$x$};
\draw (0,0,0)--(0,3.8,0) node[anchor=west] {$N$};
\draw (0,0,0)--(0,-3.5,0) node[anchor=north] {$S$};

\draw[thick] (3.5,0,0) arc (0:-180:3.5cm and 1.5cm);
\draw[thick, dashed] (3.5,0,0) arc (0:180:3.5cm and 1.4cm);

\fill[color=red] (-1.85518848397935,0,-1.69074812833648) circle(0.03);
\fill[color=red] (-1.24105907798137,0,-0.877769590490974) circle(0.03);
\fill[color=red] (-0.513217933043836,0,0.13292097831997) circle(0.03);
\fill[color=red] (0.435237081677996,0,1.27764347681299) circle(0.03);
\fill[color=red] (1.26176705002991,0,2.22995496024415) circle(0.03);
\fill[color=red]  (0.563247925291419,0,-1.52357574877502) circle(0.03);
\fill[color=red] (2.46453234810117,0,0.764568749119369) circle(0.03);
\fill[color=red]  (1.53965379987164,0,-0.39876293210221) circle(0.03);
\fill[color=red] (-0.385825623132825,0,-2.81043110211057) circle(0.03);
\fill[color=red]  (-2.56608966645273,0,0.664604886837016) circle(0.03);
\fill[color=red] (-1.2501431122937,0,1.42941442820087) circle(0.03);
\fill[color=red]  (-0.655788338675098,0,3.04150953642975) circle(0.03);
\fill[color=red] (2.12609045781879,0,-1.69828890489316) circle(0.03);

\fill[color=blue](1.94575969326994,0,2.57237983271818) circle(0.03);
\fill[color=blue](2.95781974114179,0,1.73602338317638) circle(0.03);
\fill[color=blue](-1.24959921553466,0,-2.6731188532116) circle(0.03);
\fill[color=blue](-1.91033536164312,0,-1.40157305672832) circle(0.03);
\fill[color=blue] (-3.31886909122007,0,-0.519573684890392) circle(0.03);
\fill[color=blue](0.316846546943423,0,2.65733695848966) circle(0.03);
\fill[color=blue](1.099495922204,0,1.09105284217227) circle(0.03);
\fill[color=blue](2.28537829287818,0,0.249800902448851) circle(0.03);
\fill[color=blue] (2.9,0,-0.922559647246643) circle(0.03);

\draw[ultra thin,->] (0,3.5,0)--(-2.20518201841681,-0.660298067344373,-2.00971890590983);
\draw[ultra thin,->] (0,3.5,0)--(-1.87939723683508,-1.80022337020569,-1.32924997062176);
\draw[ultra thin,->] (0,3.5,0)--(-0.903072433754454,-2.6586965587787,0.233891420505342);
\draw[ultra thin,->] (0,3.5,0)--(0.682000512109353,-1.98437137566493,2.00202037501091);
\draw[ultra thin,->] (0,3.5,0)--(1.47873108832973,-0.60183385992793,2.61340136058392);
\draw[ultra thin,->] (0,3.5,0)--(0.834173399159874,-1.68352002015628,-2.25642439885711);
\draw[ultra thin,->] (0,3.5,0)--(2.87399747167305,-0.581500962487159,0.891596595833803);
\draw[ultra thin,->] (0,3.5,0)--(2.29705073332321,-1.72174372401217,-0.594925096592401);
\draw[ultra thin,->] (0,3.5,0)--(-0.41914045800784,-0.302214044561812,-3.05310303077661);
\draw[ultra thin,->] (0,3.5,0)--(-2.9352958614905,-0.503576199821001,0.760227516350117);
\draw[ultra thin,->] (0,3.5,0)--(-1.7384908312755,-1.36721708069111,1.98779152009302);
\draw[ultra thin,->] (0,3.5,0)--(-0.695982212871221,-0.214518239178582,3.22792647077254);
\draw[ultra thin,->] (0,3.5,0)--(2.38522596760973,-0.426592519115469,-1.90528243121436);

\draw[ultra thin,->] (0,-3.5,0)--(1.1*2.10439511361836,0.66388581563,1.1*2.78210283061484);
\draw[ultra thin,->] (0,-3.5,0)--(1.1*3.01787234499297,0.42816653652,1.1*1.77126985984843);
\draw[ultra thin,->] (0,-3.5,0)--(-1.46085266627113, 0.591699377196933,-3.12502821338837);
\draw[ultra thin,->] (0,-3.5,0)--(-2.62000510556625,1.30021364499819,-1.92224288896261);
\draw[ultra thin,->] (0,-3.5,0)--(1.04*-3.4549740779355,0.28927438775 ,1.04*-0.540881114480421);
\draw[ultra thin,->] (0,-3.5,0)--(0.399897373001729,0.917409054976941,3.35386981215918);
\draw[ultra thin,->] (0,-3.5,0)--(1.83883686721049,2.353526971101,1.82471635387193);
\draw[ultra thin,->] (0,-3.5,0)--(3.19307878214232,1.39012071757429,0.349016162381109);
\draw[ultra thin,->] (0,-3.5,0)--(0.92*3.30294341772445,0.16740613968,0.92*-1.05074562563155);

\fill[color=red] (-2.20518201841681,-0.660298067344373,-2.00971890590983) circle(0.05);
\fill[color=red] (-1.87939723683508,-1.80022337020569,-1.32924997062176) circle(0.05);
\fill[color=red] (-0.903072433754454,-2.6586965587787,0.233891420505342) circle(0.05);
\fill[color=red] (0.682000512109353,-1.98437137566493,2.00202037501091) circle(0.05);
\fill[color=red] (1.47873108832973,-0.60183385992793,2.61340136058392) circle(0.05);
\fill[color=red] (0.834173399159874,-1.68352002015628,-2.25642439885711) circle(0.05);
\fill[color=red] (2.87399747167305,-0.581500962487159,0.891596595833803) circle(0.05);
\fill[color=red] (2.29705073332321,-1.72174372401217,-0.594925096592401) circle(0.05);
\fill[color=red] (-0.41914045800784,-0.302214044561812,-3.05310303077661) circle(0.05);
\fill[color=red] (-2.9352958614905,-0.503576199821001,0.760227516350117) circle(0.05);
\fill[color=red] (-1.7384908312755,-1.36721708069111,1.98779152009302) circle(0.05);
\fill[color=red] (-0.695982212871221,-0.214518239178582,3.22792647077254) circle(0.05);
\fill[color=red] (2.38522596760973,-0.426592519115469,-1.90528243121436) circle(0.05);

\fill[color=blue](1.1*2.10439511361836,0.66388581563,1.1*2.78210283061484) circle(0.05);
\fill[color=blue](1.1*3.01787234499297,0.42816653652,1.1*1.77126985984843) circle(0.05);
\fill[color=blue](-1.46085266627113, 0.591699377196933,-3.12502821338837) circle(0.05);
\fill[color=blue](-2.62000510556625,1.30021364499819,-1.92224288896261) circle(0.05);
\fill[color=blue](1.04*-3.4549740779355,0.28927438775 ,1.04*-0.540881114480421) circle(0.05);
\fill[color=blue](0.399897373001729,0.917409054976941,3.35386981215918) circle(0.05);
\fill[color=blue](1.83883686721049,2.353526971101,1.82471635387193) circle(0.05);
\fill[color=blue](3.19307878214232,1.39012071757429,0.349016162381109) circle(0.05);
\fill[color=blue](0.92*3.30294341772445,0.16740613968,0.92*-1.05074562563155) circle(0.05);

\end{tikzpicture}
\caption{The unit disk in Figure~\ref{stretched_billiard} is seen as a subset of the hyperplane $\left\{z=0\right\}$ in $\R^3$. The set of points lying in this disk is mapped onto the southern  or the northern hemisphere of the sphere $\Sph^2$ by stereographic projections. The pole of the projection  depends on whether a given point belongs to the set $\mathcal{N}_1$ (blue points, pole $S$) or $\mathcal{N}_2$ (red points, pole $N$).}
\label{projecting}
\end{subfigure}
\caption{Lifting of a toral sequence to the sphere.}
\label{projstretchedbilliard}
\end{figure}

\section{Dispersion of the multiples of a badly approximable vector modulo one}\label{secdispersion}

In this section, we complete the proof of Theorem~\ref{mainthm} by showing, with the help of the lifting process introduced in Proposition~\ref{lifting},  that there exists a sequence in $\Sph^d$ satisfying the Delone conditions~\eqref{reldens} and~\eqref{unifdiscr}.\\

\paragraph{\textbf{Notation and background specific to this section.}} A lattice $\mathcal{L}$ is a subset of $\R^d$ of the form $\mathcal{L}= \bigoplus_{j=1}^{d}\Z\bm{x}_j$, where $\bm{x}_1, \,\dots\, , \bm{x}_d$ are linearly independent vectors (we thus consider only full rank lattices). The set $\mathcal{L}^*$ stands for the lattice dual to  $\mathcal{L}$; namely, $$\mathcal{L}^*=\left\{\bm{y}\in\R^d\; :\; \forall \bm{x}\in\mathcal{L},\; \bm{x\cdot y}\in\Z\right\}.$$
The length of the shortest nonzero vector in a lattice $\mathcal{L}$ is denoted by $\mu_1(\mathcal{L})$, and the covering radius of $\mathcal{L}$ by $\rho(\mathcal{L})$. Recall that $$\rho(\mathcal{L})\; =\; \sup_{\bm{z}\in\R^d}\;\inf_{\bm{x}\in\mathcal{L}}\;\left\|\bm{z}-\bm{x} \right\|_2.$$
A theorem due to Banaszczyk~\cite[Theorem 2.2]{bana} relates the quantities $\rho(\mathcal{L})$ and $\mu_1(\mathcal{L}^*)$ as follows~:
\begin{equation}\label{bana}
\mu_1(\mathcal{L}^*)\cdot\rho(\mathcal{L})\;\le\;\frac{d}{2}\cdotp
\end{equation}
$\quad$\\

\paragraph{\textbf{Proof of Theorem~\ref{mainthm}.}} In order to establish Theorem~\ref{mainthm}, in view of the existence of maps $\tau_1$ and $\tau_2$ satisfying  conditions~\eqref{condphi} in Proposition~\ref{lifting}, it suffices to show the existence of a sequence $\left(\bm{x}_k\right)_{k\ge 1}$ in $\K_d$ satisfying the separation and density assumptions~\eqref{unifdiscrbis} and \eqref{conddensx_n}.  Such a sequence will be defined from a badly approximable vector $\bm{\alpha}=\left(\alpha_1, \,\dots\, , \alpha_d\right)\in\R^d$. Recall that this means that 
\begin{equation}\label{defbadvec}
\inf_{q\ge 1}  \; q^{1/d}\left\|q\bm{\alpha}\right\|_{\Z^d} >\; 0.
\end{equation}
From Khintchin's Transference Theorem (see~\cite[Chap.~IV, Theorem~5B]{schmidt}), this is equivalent to the property that
\begin{equation}\label{transference}
\inf_{\bm{b}\in\Z^d\backslash\left\{\bm{0}\right\}}\; \left\|\bm{\alpha\cdot b}\right\|\cdot\left\|\bm{b}\right\|_{\infty}^d\;>\; 0.
\end{equation} 
Furthermore, explicit examples of badly approximable vectors are known in any dimension (cf.~\cite[Chap.~2, \S 4]{schmidt}).

 Given $k\ge 1$, set $$\bm{x}_k\;=\; \left\{k\bm{\alpha}\right\}.$$

The separation condition~\eqref{unifdiscrbis} is then easily verified  since, given $k\ge 1$ and $m\in\Z$ such that $1\le |m|\le k^{1-1/n}$, it holds that $$\left\|\bm{x}_{k+m}- \bm{x}_{k}\right\|_2\;\ge\; \left\|\bm{x}_{k+m}- \bm{x}_{k}\right\|_{\Z^d}\; = \;  \left\|m\bm{\alpha}\right\|_{\Z^d}\;\underset{\eqref{defbadvec}}{\gg}\;\frac{1}{|m|^{1/d}}\;\underset{\eqref{elem-1}}{\ge}\; \frac{1}{\sqrt[n]{k}}\cdotp$$

The density condition~\eqref{conddensx_n} will be established by setting $\mathcal{N}_1$ to be the set of even integers and $\mathcal{N}_2$ to be the set of odd integers. To this end, the key result  is the following~:

\begin{prop}\label{propdensbad}
Assume that $\bm{\alpha}\in\R^d$ is a badly approximable vector. Then, there exists a constant $K>0$ such that for any $\epsilon>0$, the reduction modulo $\Z^d$ of the set of points $\left\{q\bm{\alpha}\; :\; 1\le q\le K\epsilon^{-d}\right\}$ is $\epsilon$--dense in $\T^d$. In other words, for any $\epsilon>0$ and any $\bm{y}\in\R^d$, there exists $q\in\llbracket 1, K\epsilon^{-d}\rrbracket$ such that $\left\|q\bm{\alpha}-\bm{y}\right\|_{\Z^d}\le \epsilon$.
\end{prop}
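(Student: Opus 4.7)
}}

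The strategy is to exploit Banaszczyk's inequality~\eqref{bana} on an auxiliary $(d+1)$--dimensional lattice whose dual's shortest vector can be controlled via Khintchin's transference principle~\eqref{transference}. Given a target $\bm{y}\in\R^d$ and $\epsilon>0$, the plan is to find $q\in\Z$ and $\bm{p}\in\Z^d$ realising simultaneously $\|q\bm{\alpha}+\bm{p}-\bm{y}\|_2\le \epsilon$ and $q/Q\approx 1$, where $Q\asymp \epsilon^{-d}$ is a parameter.

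Set $Q=K\epsilon^{-d}$ for a constant $K>0$ to be determined, and introduce the lattice
\begin{equation*}
\mathcal{L}_Q\;=\; \bigl\{(q\bm{\alpha}+\bm{p},\; \epsilon q/Q)\;:\; q\in\Z,\, \bm{p}\in\Z^d\bigr\}\;\subset\;\R^{d+1}.
\end{equation*}
A direct calculation identifies its dual as
\begin{equation*}
\mathcal{L}_Q^*\;=\;\bigl\{(\bm{b},\, (Q/\epsilon)(n-\bm{b}\cdot\bm{\alpha}))\;:\;\bm{b}\in\Z^d,\, n\in\Z\bigr\}.
\end{equation*}
The first key step is to lower bound $\mu_1(\mathcal{L}_Q^*)$. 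Nonzero dual vectors with $\bm{b}=\bm{0}$ have norm at least $Q/\epsilon$. For $\bm{b}\neq\bm{0}$, minimising over $n$ and invoking~\eqref{transference} in the form $\|\bm{b}\cdot\bm{\alpha}\|\gg \|\bm{b}\|_\infty^{-d}$, the squared norm is bounded below by a quantity of the form $B^2+(Q/\epsilon)^2\cdot c^2\cdot B^{-2d}$ with $B=\|\bm{b}\|_\infty\ge 1$. An elementary one--variable optimisation (balancing the two summands) then yields $\mu_1(\mathcal{L}_Q^*)\gg (Q/\epsilon)^{1/(d+1)}\asymp K^{1/(d+1)}\epsilon^{-1}$, where implicit constants depend only on $d$ and on the badness constant of $\bm{\alpha}$.

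Applying Banaszczyk's inequality~\eqref{bana} in $\R^{d+1}$ then gives $\rho(\mathcal{L}_Q)\ll \epsilon/K^{1/(d+1)}$. Choosing $K$ large enough ensures $\rho(\mathcal{L}_Q)\le \epsilon/2$. The final step is to apply this covering bound to the target point $(\bm{y},\epsilon)\in\R^{d+1}$: some element $(q\bm{\alpha}+\bm{p},\, \epsilon q/Q)$ of $\mathcal{L}_Q$ lies within Euclidean distance $\epsilon/2$ of it. Projecting onto each factor shows that $\|q\bm{\alpha}+\bm{p}-\bm{y}\|_2\le\epsilon/2$ (whence $\|q\bm{\alpha}-\bm{y}\|_{\Z^d}\le\epsilon$) and $|q/Q-1|\le 1/2$, forcing $q\in[Q/2, 3Q/2]$. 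In particular, for all sufficiently small $\epsilon$, $q$ is a positive integer with $q\le 3K\epsilon^{-d}/2$; redefining $K$ as $3K/2$ and handling the regime of large $\epsilon$ trivially (the torus $\T^d$ has bounded diameter, so a single $q=1$ suffices when $\epsilon$ exceeds a fixed threshold) completes the proof.

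The main obstacle is the one--variable optimisation in the second paragraph, where the precise exponent $1/(d+1)$ must emerge from balancing the contributions of $\|\bm{b}\|_\infty$ and $\|\bm{b}\cdot\bm{\alpha}\|$ in the dual lattice; this is what ultimately converts the weaker exponent $\epsilon^{-(d+1)}$ one might naively expect from Banaszczyk into the optimal $\epsilon^{-d}$, matching the volumetric lower bound dictated by the covering of $\T^d$ by $\epsilon$--balls. The introduction of the scaling $\epsilon/Q$ (rather than just $1/Q$) in the last coordinate of $\mathcal{L}_Q$ is crucial to achieve this balance.
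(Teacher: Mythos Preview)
Your argument is correct and takes a genuinely different route from the paper's proof. Both proofs hinge on Banaszczyk's inequality~\eqref{bana} together with the dual form of bad approximability~\eqref{transference}, but the implementations diverge. The paper argues by contradiction and stays in $\R^d$ throughout: it passes through the sequence of best approximation denominators $(q_\nu)$, replaces $\bm{\alpha}$ by the rational $\bm{p}_\nu/q_\nu$, and then analyses the $d$--dimensional lattice $\mathcal{L}_\nu=\Z\cdot(\bm{p}_\nu/q_\nu)+\Z^d$ and its dual. Your proof is direct and works in $\R^{d+1}$: the constraint $q\lesssim K\epsilon^{-d}$ is encoded as an extra coordinate $\epsilon q/Q$, so that the covering radius of a single $(d+1)$--dimensional lattice simultaneously controls the approximation to $\bm{y}$ and the size of $q$.

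Your route buys a cleaner structure --- no best approximants, no Step~2 perturbation argument, no separate lacunarity estimate~\eqref{ordregrandeur} --- at the cost of moving up one dimension. The paper's route keeps the ambient dimension at $d$ and makes the connection to classical Diophantine approximation (best approximants, the gap between $q_\nu$ and $q_{\nu+1}$) more visible. The optimisation you flag as the ``main obstacle'' is the analogue of the paper's Step~5, where the same balance between $\|\bm{v}\|_\infty$ and $\|\bm{\alpha}\cdot\bm{v}\|$ produces the exponent $-d$; in both cases it is the transference inequality that converts Banaszczyk's raw bound into the sharp covering rate.
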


To show that this proposition indeed implies the sought density condition, fix $k\ge 1$ and $\bm{y}\in\overline{\K}_d$. Let furthermore $m_i$ be an integer of the form $m_i=2q+\eta_i$, where $q\ge 1$ and where $\eta_i\in\left\{0,1\right\}$ is chosen so that $k+m_i\in\mathcal{N}_i$.  Applying Proposition~\ref{propdensbad} to the badly approximable vector $2\bm{\alpha}$ with $\epsilon=1/(\sqrt[n]{k})$, one can guarantee the existence of an index $1\le q\le K k^{1-1/n}$ such that $$\left\|2q\bm{\alpha}-\left(\bm{y}-(k+\eta_i)\bm{\alpha}\right)\right\|_{\Z^d}\;=\; \left\|(k+m_i)\bm{\alpha}-\bm{y}\right\|_{\Z^d}\le \epsilon = \frac{1}{\sqrt[n]{k}}\cdotp$$ If the point $\bm{y}$ lies $\epsilon$--away from the boundary of $\K_d$, the above inequality implies 
that $$\left\|(k+m_i)\bm{\alpha}-\bm{y}\right\|_{\Z^d}\; =\; \left\|\bm{x}_{k+m_i}-\bm{y}\right\|_{\infty}\;\le\;\epsilon,$$ whence the claim. If, however, $\bm{y}$ lies in an $\epsilon$--neighbourhood of $\partial\K_d$, apply the previous reasoning to a vector $\bm{y'}$ $\epsilon$--away from $\partial\K_d$ and $\epsilon$--close to $\bm{y}$. Then, $$\left\|\bm{x}_{k+m_i}-\bm{y}\right\|_{2}\;\ll\; \left\|\bm{x}_{k+m_i}-\bm{y'}\right\|_{\infty}+\left\|\bm{y'}-\bm{y}\right\|_{2}\;=\; \left\|\bm{x}_{k+m_i}-\bm{y'}\right\|_{\Z^d}+\left\|\bm{y'}-\bm{y}\right\|_{2}\;\le\; 2\epsilon=\frac{2}{\sqrt[n]{k}},$$ which establishes the claim in this case also and therefore in all cases. 

\begin{proof}[Proof of Proposition~\ref{propdensbad}] 
The proof will be subdivided into five  steps. Throughout, the implicit constants in the Vinogradov notation $\ll, \gg$ and $\asymp$ are allowed to depend on the dimension  $d$ and on the vector $\bm{\alpha}$, but not on any other parameter.

Let $\epsilon>0$ and let
\begin{equation}\label{boundK}
K\ge 2^{d+1}
\end{equation} 
be given. Assume that the reduction modulo $\Z^d$ of the finite sequence $\left(q\bm{\alpha}\right)_{1\le q\le M}$, where $M=K\epsilon^{-d}$, is not $\epsilon$--dense in $\T^d$. The goal is to show that this cannot happen if $K$ is chosen sufficiently large (depending on $\bm{\alpha}$ and $d$ only).\\

\paragraph{\emph{Step 1: introducing the sequence of best approximants to $\bm{\alpha}$}.} Denote by $\left(q_{\nu}\right)_{\nu\ge 1}$ the strictly increasing sequence of the denominators of the best approximants to $\bm{\alpha}$. These are defined as follows~: for all $\nu\ge 1$ and all $q\in\llbracket 1, q_\nu -1\rrbracket$, $$\left\|q_{\nu}\bm{\alpha}\right\|_{\Z^d}\; <\; \left\|q\bm{\alpha}\right\|_{\Z^d}.$$ It then holds that 
\begin{equation}\label{encadr}
\frac{1}{q_{\nu}^{1/d}}\;\ll\; \left\|q_{\nu}\bm{\alpha}\right\|_{\Z^d}\;\le\;\frac{1}{\left(q_{\nu+1}-1\right)^{1/d}}\cdotp
\end{equation} 
Indeed, the first inequality follows from the definition of bad approximability~\eqref{defbadvec}, whereas the second one is a direct consequence of the definition of $q_\nu$ and of Minkowski's Convex Body Theorem (see~\cite[Theorem~III, Appendix~B]{cassels}). In particular, for all $\nu\ge 1$, 
\begin{equation}\label{ordregrandeur}
q_{\nu}\;\asymp\; q_{\nu+1}.
\end{equation}
\\

\paragraph{\emph{Step 2: reducing the problem to a property of density of the multiples of a best ap\-pro\-xi\-mant.}} Consider the integer $\nu\ge 1$ such that 
\begin{equation}\label{encadrement}
q_{\nu}\;\le\; K\epsilon^{-d}\;<\; q_{\nu+1}.
\end{equation}
If $\bm{p}_{\nu}\in\Z^d$ is the integer vector such that $\left\|q_{\nu}\bm{\alpha}\right\|_{\Z^d}=\left\|q_{\nu}\bm{\alpha}-\bm{p}_\nu\right\|_{\infty}$, then, for any integer $1\le q\le  q_{\nu}$, it holds that $$\left\|q\bm{\alpha}-q\frac{\bm{p}_{\nu}}{q_{\nu}}\right\|_{\infty}\;\underset{\eqref{encadr}}{\le}\; \frac{q}{q_{\nu}\left(q_{\nu+1}-1\right)^{1/d}}\;\le\;\left(\frac{2}{q_{\nu+1}}\right)^{1/d}\;\underset{\eqref{encadrement}}{\le}\;\left(\frac{2}{K}\right)^{1/d}\epsilon\;\underset{\eqref{boundK}}{\le}\; \frac{\epsilon}{2}\cdotp\\$$

This implies that in the metric induced by $\left\|\, .\,\right\|_{\Z^d}$, the successive terms of the sequence $\left(q\bm{\alpha}\right)_{1\le q\le q_\nu}$ remain $\epsilon/2$--close to the \sloppy corresponding terms of the sequence $\left(q\bm{\bm{p}_\nu}/q_{\nu}\right)_{1\le q\le q_{\nu}}$. In particular, the reduction modulo $\Z^d$ of the latter sequence cannot be $\epsilon/2$--dense in $\T^d$ since, by assumption, the former sequence is not $\epsilon$--dense in $\T^d$. \\

\paragraph{\emph{Step 3: recasting the problem in terms of the covering radius of a suitable lattice.}} Let $\mathcal{L}_\nu$ be the lattice in $\R^d$ spanned by the vector $\bm{p}_\nu/q_\nu$ and by the elements of the standard basis in $\R^d$. In other words, $$\mathcal{L}_\nu\;=\; \left\{q\frac{\bm{p}_\nu}{q_\nu}+\bm{m}\; :\; q\in\Z,\; \bm{m}\in\Z^d\right\}.$$ Clearly, the projection of $\mathcal{L}_\nu$ on $\T^d$ coincides with the reduction modulo $\Z^d$ of the finite sequence $\left(q\bm{\bm{p}_\nu}/q_{\nu}\right)_{1\le q\le q_{\nu}}$.
The fact that this sequence is not $\epsilon/2$--dense in $\T^d$ then implies that 
\begin{equation}\label{boundcovrad}
\rho(\mathcal{L}_\nu)\; \gg\; \epsilon
\end{equation} 
(we use here the equivalence of norms in $\R^d$, since the covering radius $\rho(\mathcal{L}_\nu)$ is defined in terms of the Euclidean norm).\\

\paragraph{\emph{Step 4: bounding the length of the shortest vector of the dual lattice.}} It is elementary to check that the lattice dual to $\mathcal{L}_\nu$ is $$\mathcal{L}_\nu^*\;=\;\left\{\bm{v}\in\Z^d\; :\; \bm{v\cdot p}_\nu \equiv 0 \pmod{q_\nu}\right\}.$$ From Banaszczyk's inequality~\eqref{bana}, the lower bound~\eqref{boundcovrad} for the covering radius of the lattice $\mathcal{L}_\nu$ implies the following upper bound for the shortest nonzero vector in the dual lattice $\mathcal{L}_\nu^*$~: $$\mu_1( \mathcal{L}_\nu^*)\;\ll\;\epsilon^{-1}.$$ In other words, there exists $\bm{v}\in\Z^d$ such that 
\begin{equation}\label{shorvector}
1\;\le\; \left\|\bm{v}\right\|_2\;\ll\; \epsilon^{-1}\qquad\textrm{ and }\qquad \bm{v\cdot}\frac{\bm{p}_\nu}{q_\nu}\in\Z.
\end{equation}

\paragraph{\emph{Step 5: conclusion.}}  Let $\bm{v}\in\Z^d$ be the nonzero vector appearing in~\eqref{shorvector} and let $c$ be the integer such that $\bm{v\cdot p}_\nu=cq_\nu$.  Then, using the Cauchy--Schwarz inequality to obtain the relation in the second line below, one has~:
\begin{align*}
\epsilon^d\;\ll\; \left\|\bm{v}\right\|_{\infty}^{-d}\; \underset{\eqref{transference}}{\ll}\; \left\|\bm{\alpha\cdot v}\right\|\; & \le\; \left|\bm{\alpha\cdot v}-c\right|\;=\; \left|\bm{v\cdot}\left( \bm{\alpha}- \frac{\bm{p}_{\nu}}{q_\nu}\right)\right|\\
&\le\; \left\|\bm{v}\right\|_2 \cdot\left\| \bm{\alpha}- \frac{\bm{p}_{\nu}}{q_\nu}\right\|_2 \\
&\underset{\eqref{encadr}\&\eqref{shorvector}}{\ll}\; \frac{\epsilon^{-1}}{q_{\nu}q_{\nu+1}^{1/d}}\\
&\underset{\eqref{ordregrandeur}\&\eqref{encadrement}}{\ll}\; \frac{\epsilon^d}{K^{1+1/d}}\cdotp
\end{align*}
This leads to a contradiction for $K$ large enough, and thus completes the proof.
\end{proof}

\section{A Focus on the three--dimensional Case}\label{sec3}

In this section, we provide, in the case of the three dimensional Euclidean space, an alternative to the construction of a  spiral Delone set presented in Sections~\ref{liftsection} and~\ref{secdispersion}. That construction was based on the lifting of a linear toral sequence to the sphere. Here, we show that a spiral Delone set can be obtained in $\R^3$ with the help of a  sequence in $\Sph^2$ arising from a continuous spherical flow.\\

Let $\mathcal{T}$ be a (solid) regular tetrahedron with centroid at the origin admitting the unit sphere $\Sph^2$ as a circumscribing sphere. Denote by 
$A, B, C$ and $D$  its vertices labelled as in Figures~\ref{regtetra} and~\ref{tetranet}.  From elementary geometric considerations, $\mathcal{T}$ has height $h=4/3$ and edge length $a=\sqrt{8/3}$. 

Given a vector $\bm{\alpha}=\left(\alpha_1, \alpha_2\right)\in\R^2$ such that $\alpha_1, \alpha_2> 0$, consider the geodesic flow on the surface of $\mathcal{T}$ in direction $\bm{\alpha}$ starting at a given vertex (say, $B$) on a given face containing this vertex (say, $ABC$). It is defined as the map
\begin{equation}\label{flowtetra}
\mathfrak{t}_{\bm{\alpha}}~: t\ge 0\;\mapsto\; \mathfrak{t}_{\bm{\alpha}}\left(t\right)
\end{equation}
such that 
\begin{equation}\label{equaflowtetra}
\mathfrak{t}_{\bm{\alpha}}(t)\;=\; t\alpha_1\cdotp \overrightarrow{BC}+t\alpha_2\cdotp \overrightarrow{BA}
\end{equation}
as long as the right--hand side remains on the face $ABC$ (here$, BC$ and $BA$ are the two edges on the face $ABC$ adjacent to the vertex $B$, taken counterclockwise). Whenever this line hits an edge on $\partial \mathcal{T}$, it is prolonged by a process of unfolding~: the faces adjacent to the edge are rotated along this edge so as to be in coplanar position; the line is then extended in the natural way to the adjacent face. All faces are then turned back to their initial positions. Repeating this process yields a well--defined linear flow on $\partial \mathcal{T}$ such as the one represented in Figure~\ref{regtetra}, provided that the flow does not hit a vertex. This situation can easily been avoided upon requiring that the ratio $\alpha_1/\alpha_2$ is irrational, which we shall assume henceforth.

If 
\begin{equation}\label{radialproj}
\mathfrak{r}~: \bm{x}\in\R^3\backslash\left\{\bm{0}\right\}\;\mapsto\; \frac{\bm{x}}{\left\|\bm{x}\right\|_2}\in\Sph^2
\end{equation}
denotes the radial projection onto  the sphere, the map 
\begin{equation}\label{geodesictetra}
t\ge 0\;\mapsto\; \left(\mathfrak{r}\circ\mathfrak{t}_{\bm{\alpha}}\right) (t)
\end{equation} 
then defines a continuous flow on $\Sph^2$. It is represented in Figure~\ref{sphtetra}. 

As will be clear from the properties of this flow established below, the curve defined by~\eqref{geodesictetra} necessarily self--intersects. This prevents the spherical sequence $\left(\left(\mathfrak{r}\circ\mathfrak{t}_{\bm{\alpha}}\right) (k)\right)_{k\ge 1}$ from satisfying the separation condition~\eqref{unifdiscr} needed for a spiral set defined from it to be Delone. The following theorem nevertheless shows that a subsequence of this spherical sequence can be used to construct such a Delone set. This subsequence should be seen as being defined in a natural way in view of the equivalent reformulation of the separation condition in terms of a packing property in~\eqref{packsep}.

\begin{thm}\label{thmdistrtetra}
Assume that $\bm{\alpha}=\left(\alpha_1, \alpha_2\right)\in\R^2$ is a badly approximable vector such that $\alpha_1, \alpha_2>0$. Let $\gamma>0$. Define by induction a strictly increasing sequence of natural integers $\left(j_k\right)_{k\ge 1}$ and a spherical sequence $\left(\bm{u}_k\right)_{k\ge 1}$ as follows~: set $j_1=1$ and $\bm{u}_1=\left(\mathfrak{r}\circ\mathfrak{t}_{\bm{\alpha}}\right) (j_1)$. 

For $k\ge 2$, let $\mathfrak{D}_k$ be the subset of the sphere $\Sph^2$ given by the union $$\mathfrak{D}_k\;=\;\bigcup_{1\le m < k^{2/3}}\mathcal{C}_2\left(\bm{u}_{k-m}, \, \frac{\gamma}{k^{1/3}}\right).$$  The integer $j_k$ and the unit vector $\bm{u}_k$ are defined from $\mathfrak{D}_k$ by setting 
\begin{equation}\label{defj_ku_k}
j_k\;=\;\min\left\{p>j_{k-1}\; :\; \left(\mathfrak{r}\circ\mathfrak{t}_{\bm{\alpha}}\right) (p)\not\in\mathfrak{D}_k\right\}\in\N\qquad\textrm{ and }\qquad \bm{u}_k=\left(\mathfrak{r}\circ\mathfrak{t}_{\bm{\alpha}}\right) (j_k)\in\Sph^2.
\end{equation} 

Then, for  $\gamma>0$ small enough, the integer sequence $\left(j_k\right)_{k\ge 1}$ is well--defined. Moreover, the spiral set~\eqref{sk} obtained from the above spherical sequence $\left(\bm{u}_k\right)_{k\ge 1}$ is Delone in $\R^3$.
\end{thm}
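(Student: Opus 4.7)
My strategy is to reduce the geodesic flow $\mathfrak{t}_{\bm{\alpha}}$ on $\partial\mathcal{T}$ to a linear flow on a flat two--torus via the standard unfolding of the tetrahedral surface, and then to combine the quantitative equidistribution provided by Proposition~\ref{propdensbad} with the covering/packing characterisations established in Section~\ref{seccns}. Unfolding $\partial\mathcal{T}$ along its edges identifies its four faces with copies of an equilateral triangle in $\R^2$, and the lift of $\mathfrak{t}_{\bm{\alpha}}$ becomes the straight--line flow $t\mapsto t\bm{\alpha}$. Quotienting by the full--rank lattice $\Lambda\subset\R^2$ generated by the unfolding translations yields a linear flow on the flat torus $\R^2/\Lambda$. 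Since $\alpha_1/\alpha_2$ is irrational the flow avoids the vertices of $\mathcal{T}$, so composing with $\mathfrak{r}$ yields a bi--Lipschitz correspondence between the integer--time samples of the spherical flow and the set $\left\{p\bm{\alpha}\bmod \Lambda:p\ge 1\right\}$ in $\R^2/\Lambda$. Up to a linear change of variables transforming $\Lambda$ to $\Z^2$ and $\bm{\alpha}$ to another badly approximable vector, Proposition~\ref{propdensbad} then yields the uniform dispersion bound
\begin{equation}\label{disper_plan}
\min_{T<p\le T+N}\,\dsphb\bigl(\bm{v},(\mathfrak{r}\circ\mathfrak{t}_{\bm{\alpha}})(p)\bigr)\;\ll\;\frac{1}{\sqrt N}
\end{equation}
uniformly in $T\ge 0$, $\bm{v}\in\Sph^2$ and $N\ge 1$.

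Granting~\eqref{disper_plan}, well--definedness of $(j_k)_{k\ge 1}$ follows from a measure argument: $\mathfrak{D}_k$ is a union of at most $k^{2/3}$ spherical caps of radius $\gamma/k^{1/3}$, so $\sigma_2(\mathfrak{D}_k)\le\pi\gamma^2$, and for $\gamma$ sufficiently small the complement is a nonempty open subset of $\Sph^2$, which the sample sequence meets by density of the orbit. The separation condition~\eqref{unifdiscr} required by Proposition~\ref{cnsunifdiscr} is then essentially built into the construction. Indeed, $\bm{u}_k\notin\mathfrak{D}_k$ gives $\dsphb(\bm{u}_k,\bm{u}_{k-m})\ge\gamma/k^{1/3}$ whenever $1\le m<k^{2/3}$; for $0<m\le\kappa k^{2/3}$ with $\kappa$ small and $k$ large, one has $m<(k+m)^{2/3}$, so applying the construction at index $k+m$ yields $\dsphb(\bm{u}_{k+m},\bm{u}_k)\ge\gamma/(k+m)^{1/3}\gg 1/k^{1/3}$, and the case $-\kappa k^{2/3}\le m<0$ is symmetric.

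The relative density~\eqref{reldens} will be the substantive part. Fix $k\ge 1$ and $\bm{v}\in\Sph^2$, and set $N=\lceil ck^{2/3}\rceil$ for a constant $c>0$ to be optimised. Since $(j_l)$ is strictly increasing the window $(j_k,j_{k+N}]$ contains at least $N$ integer times, so applying~\eqref{disper_plan} produces a $p\in(j_k,j_{k+N}]$ with $\dsphb(\bm{v},(\mathfrak{r}\circ\mathfrak{t}_{\bm{\alpha}})(p))\ll 1/k^{1/3}$. Two cases arise. Either $p=j_l$ for some integer $l\in(k,k+N]$, in which case $\bm{u}_l$ itself is the desired approximant with $l-k\le N\ll k^{2/3}$. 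Or $j_l<p<j_{l+1}$ for some $l\in[k,k+N)$; by definition of $j_{l+1}$, the value $p$ was rejected because $(\mathfrak{r}\circ\mathfrak{t}_{\bm{\alpha}})(p)\in\mathfrak{D}_{l+1}$, so there exists $1\le m'<(l+1)^{2/3}$ with $\dsphb(\bm{u}_{l+1-m'},(\mathfrak{r}\circ\mathfrak{t}_{\bm{\alpha}})(p))\le\gamma/(l+1)^{1/3}\le\gamma/k^{1/3}$. The triangle inequality gives $\dsphb(\bm{u}_{l+1-m'},\bm{v})\ll 1/k^{1/3}$, and the bounds $m'\ll k^{2/3}$ and $l+1\in[k+1,k+N]$ guarantee that $l+1-m'$ has the form $k+m$ with $|m|\ll k^{2/3}$. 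Either case feeds into Proposition~\ref{cnsreldens}.

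The main technical obstacle I anticipate is the unfolding step and the transfer of Proposition~\ref{propdensbad} from $\Z^2$ to the general lattice $\Lambda$: one must verify that the regular tetrahedron unfolds into a planar triangular tiling whose translation subgroup acts cocompactly on $\R^2$, and that the composition of $\mathfrak{r}$ with the inverse unfolding is bi--Lipschitz away from the cone singularities at the vertices of $\mathcal{T}$. A secondary subtlety concerns the uniformity in $\gamma$: it must be chosen small enough that both $\sigma_2(\mathfrak{D}_k)<\sigma_2(\Sph^2)$ (for well--definedness) and the implicit constant in~\eqref{disper_plan} dominates the forbidden cap radius when bounding distances in Case~B of the density argument.
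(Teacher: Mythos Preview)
Your proposal is correct and follows essentially the same route as the paper: unfold $\partial\mathcal{T}$ to a linear toral flow, use a measure bound on $\sigma_2(\mathfrak{D}_k)$ together with orbit density for well--definedness, read the separation condition~\eqref{unifdiscr} directly off the definition of $\mathfrak{D}_k$, and derive the covering condition~\eqref{reldens} from Proposition~\ref{propdensbad} via exactly the two--case analysis (either $p=j_l$, or $p$ was rejected and hence lies in some $\mathfrak{D}_{l+1}$) that you describe.

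One clarification on the obstacle you flag at the end: the unfolding does \emph{not} yield a bi--Lipschitz correspondence between $\partial\mathcal{T}$ and $\R^2/\Lambda$. The fundamental parallelogram contains two copies of each face of $\mathcal{T}$, related by a central symmetry, so the folding map $\R^2/\Lambda\to\partial\mathcal{T}$ is a 2--to--1 branched cover and can bring distant toral points close together on $\partial\mathcal{T}$. This does not damage your argument, because you only ever use the Lipschitz direction (toral closeness $\Rightarrow$ tetrahedral closeness), which is precisely what is needed for~\eqref{disper_plan}; separation is enforced directly on the sphere via $\mathfrak{D}_k$ and never appeals to the reverse inequality. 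Also, the linear change of variables sends the flow direction to $\bm{\alpha}$ itself (not to a new badly approximable vector), and your ``secondary subtlety'' about $\gamma$ in Case~B is not a real constraint: the triangle inequality there gives $\ll(1+\gamma)k^{-1/3}$ for any $\gamma$, so the only restriction on $\gamma$ comes from the measure argument for well--definedness.
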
 

In preparation for the proof of Theorem~\ref{thmdistrtetra}, we first show how  to reduce the study of the tetrahedral flow~\eqref{flowtetra} to that of a toral flow. 

The net of the tetrahedron (Figure~\ref{tetranet}) can be used to tile the plane in a way that is compatible with the identification of the edges, as illustrated in  Figure~\ref{tiling tetra}~: two triangles represent  the same face of the tetrahedron (hence, have the same label) if, and only if, they share a unique common vertex. The process of unfolding then maps the geodesic flow~\eqref{flowtetra} to a half--line $(\Delta)$ as represented  in Figure~\ref{tiling tetra}. This half--line has parametric equation~\eqref{equaflowtetra} if one identifies $B$ with the origin of $(\Delta)$ in Figure~\ref{tiling tetra} and $A$ and $C$ with the vertices of the corresponding triangle 1 when labelled as in Figure~\ref{tetranet}.

Consider the matrix $$\mathfrak{M}= 
\begin{pmatrix}
\sqrt{3/8}&-3/8\\
0 & 3/4
\end{pmatrix},
$$
which maps the vector $\overrightarrow{BC}=\left(\sqrt{8/3}, 0\right)^T$ to the vector $\left(1, 0\right)^T$ and the vector $\overrightarrow{BA}=\left(\sqrt{2/3}, 4/3\right)^T$  to the vector $\left(0, 1\right)^T$.  The image under the linear transformation $\mathfrak{M}$ of the vertices of the tiling obtained from the net of the tetrahadron (Figure~\ref{tiling tetra}) is then the standard lattice in Figure~\ref{biliptiling}. This linear transformation maps the half--line with parametric equation~\eqref{equaflowtetra} to the half--line with parametric equation $t\mapsto t\bm{\alpha}$.  By periodicity, this half--line projects onto a linear flow in the $2\times 2$ torus identified with the square $\K'_2=[-1,1)^2$, namely
\begin{equation}\label{eqlinflow}
\tilde{\mathfrak{s}}_{\bm{\alpha}}~: t\;\mapsto\; \left(2\left\{t\alpha_1+\frac{1}{2}\right\}-1, \;2\left\{t\alpha_2+\frac{1}{2}\right\}-1\right).
\end{equation}
This is represented in Figure~\ref{reductoralflow}. \\

Thus, upon unfolding the tetrahedral flow~\eqref{flowtetra}, taking its image under the linear map $\mathfrak{M}$ and then reducing it by periodicity, one obtains the toral flow~\eqref{eqlinflow}. Not all steps in this process are, however, bi--Lipschitz, in such a way that that the density and separation properties of a toral sequence obtained from~\eqref{eqlinflow} cannot be immediately lifted to the  tetrahedral sequence it arises from.  

To see this, note first that the square $\K'_2$ contains two copies of each of the faces of the tetrahedron $\mathcal{T}$ (see Figure~\ref{reductoralflow}). Two faces sharing the same label are related by a reflection in the origin. When considering the tetrahedral flow~\eqref{flowtetra}, this is translated by the fact that this flow crosses each face in two different directions, and therefore self--intersects on each face. 

Another reason implying that the separation condition can fail to transfer from the toral flow to the surface of the tetrahedron is when folding back the net. Indeed, it should be clear that the distance between two points lying in the same triangle in the square $\K'_2$ is, up to a multiplicative constant depending only on the bi--Lipschitz map induced by $\mathfrak{M}$, the same as the distance between the two points they come from in the corresponding face of the tetrahedron. As the unfolding process distorts distances between faces, this, however, need not be true anymore if the two points lie in two different triangles in $\K'_2$. 

As will be made clear from the proof of Theorem~\ref{thmdistrtetra} below, the definition of the sequence $\left(j_k\right)_{k\ge 1}$ in~\eqref{defj_ku_k} provides a natural way to avoid the above--mentioned obstacles to  fulfil the sought separation conditon.\\

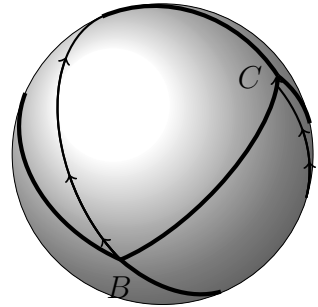
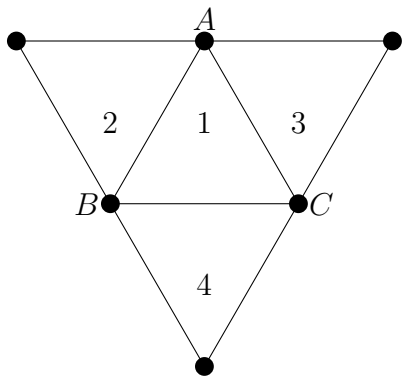
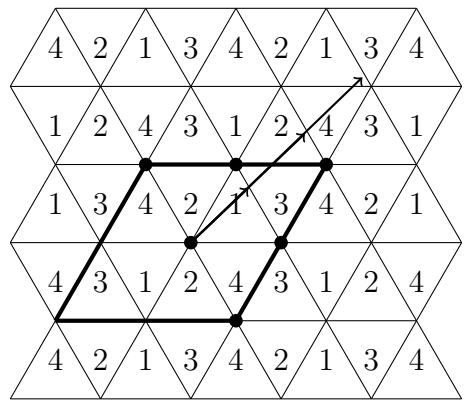
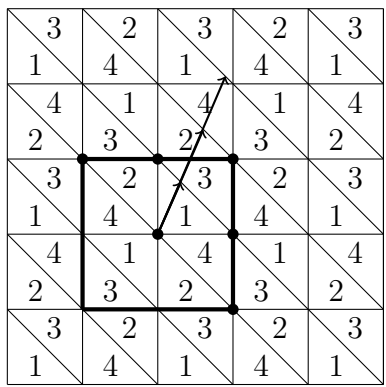
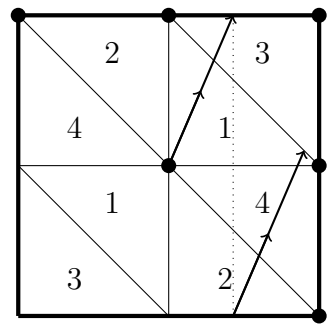
\begin{figure}[h!]
\begin{subfigure}{.45\textwidth}
  \centering
\begin{tikzpicture}[scale=1.8]

\draw[thick] (1,0, -0.7071) -- (0,1,0.7071);
\draw[thick]    (-1,0,-0.7071)-- (0,1, 0.7071);
\draw[thick]    (0,-1,0.7071) -- (0,1, 0.7071);
\draw[thick]    (0,-1,0.7071) -- (-1,0,-0.7071);
\draw[dashed,thick]  (-1,0,-0.7071) -- (1,0, -0.7071);
\draw[thick]   (0,-1,0.7071) -- (1,0, -0.7071);

\draw (0,-1,0.7071) -- (0.33,0.66, 0.33*0.7071 );
\draw[->] (0,-1,0.7071) -- (0.1667,-0.1667, 0.6667*0.7071 );

\draw[dashed] (0.33,0.66, 0.33*0.7071 )--(-0.5, 0.5, 0);
\draw[dashed, ->] (0.33,0.66, 0.33*0.7071 )--(0.5*-0.5+0.5*0.33, 0.5*0.5+0.5*0.66, 0.5*0.33*0.7071);

\draw (-0.5, 0.5, 0)--(-0.66, -0.33, -0.33*0.7071);
\draw[->] (-0.5, 0.5, 0)--(0.5*-0.66+0.5*-0.5, 0.5*-0.33+0.5*0.5, 0.5*-0.33*0.7071);

\draw[dashed] (-0.66, -0.33, -0.33*0.7071)-- (0.9*1,0.1*-1, 0.9*-0.7071+0.1*0.7071);
\draw[dashed, ->] (-0.66, -0.33, -0.33*0.7071)-- (0.5*0.9*1+0.5*-0.66,0.5*0.1*-1+0.5*-0.33, 0.5*0.9*-0.7071+0.5*0.1*0.7071+0.5*-0.33*0.7071);

\draw[->] (0.9*1,0.1*-1, 0.9*-0.7071+0.1*0.7071)--(0.5*0.95+0.5*0.9*1, 0.5*0.05+0.5*0.1*-1,0.5*-0.9*0.7071+0.5*0.9*-0.7071+0.5*0.1*0.7071);

\draw (0,1,0.7071) node[anchor=south]   {$A$};
\draw (0,-1,0.7071) node[anchor=north]   {$B$};
\draw (1,0, -0.7071) node[anchor=west]   {$C$};
\draw (-1,0,-0.7071) node[anchor=east]   {$D$};

\end{tikzpicture}
  \caption{Linear flow on  the surface of a regular tetrahedron $ABCD$.}
  \label{regtetra}
\end{subfigure}
\hfill
\begin{subfigure}{.45\textwidth}
  \centering
\begin{tikzpicture}[scale=1]
       \tdplotsetrotatedcoords{20}{80}{0}
       \draw [ball color=white,very thin,tdplot_rotated_coords] (0,0,0) circle (2) ;

\fill (1.2,0.75, -0.7071)  node[anchor=east]   {$C$};
\fill (0,-0.9,1.5)  node[anchor=north]   {$B$};

        \tdplotdefinepoints(0,0,0)(0.75*0+0.25*1,0.75*-1+0.25*0,0.75*0.7071+0.25*-0.7071)(1,0,-0.7071)
	       \tdplotdrawpolytopearc[ultra thick]{2}{}{}

        \tdplotdefinepoints(0,0,0)(0, -0.333, 0.7071)(0,1,0.7071)
	       \tdplotdrawpolytopearc[ultra thick]{2}{}{}

        \tdplotdefinepoints(0,0,0)(0,1,0.7071)(1,0, -0.7071)
	       \tdplotdrawpolytopearc[ultra thick]{2}{}{}

        \tdplotdefinepoints(0,0,0)(1,0, -0.7071)(0.55*1+0.45*-1,0.55*0+0.45*0.8165,0.55*-0.7071+0.45*-0.7071)
	       \tdplotdrawpolytopearc[ultra thick]{2}{}{}

        \tdplotdefinepoints(0,0,0)(0.35*-1+0.65*0,0.35*0.8165+0.65*1,0.65*0.7071+0.35*-0.7071)(0,1,0.7071)
	       \tdplotdrawpolytopearc[ultra thick]{2}{}{}

        \tdplotdefinepoints(0,0,0)(1,0, -0.7071)(0, -0.333, 0.7071)
	       \tdplotdrawpolytopearc[thick]{2}{}{}

        \tdplotdefinepoints(0,0,0)(1,0, -0.7071)(0.25*0+0.75*1, 0.25*-0.333+0.75*0, 0.25*0.7071+0.75*-0.7071)
	       \tdplotdrawpolytopearc[thick, ->]{2}{}{}

        \tdplotdefinepoints(0,0,0)(1,0, -0.7071)(0.5*0+0.5*1, 0.5*-0.333+0.5*0, 0.5*0.7071+0.5*-0.7071)
	       \tdplotdrawpolytopearc[thick, ->]{2}{}{}

        \tdplotdefinepoints(0,0,0)(1,0, -0.7071)(0.75*0+0.25*1, 0.75*-0.333+0.25*0, 0.75*0.7071+0.25*-0.7071)
	       \tdplotdrawpolytopearc[thick, ->]{2}{}{}
	       	       	       
        \tdplotdefinepoints(0,0,0)(0.6*-0.33+0.4*0.1, 0.6*0.54+0.4*0.9, 0.6*-0.71+0.4*0.9*0.7071+0.4*0.1*-0.7071)(0.1, 0.9, 0.9*0.7071+0.1*-0.7071)	 
	       \tdplotdrawpolytopearc[thick]{2}{}{}

        \tdplotdefinepoints(0,0,0)(0.6*-0.33+0.4*0.1, 0.6*0.54+0.4*0.9, 0.6*-0.71+0.4*0.9*0.7071+0.4*0.1*-0.7071)(0.75*0.6*-0.33+0.75*0.4*0.1+0.25*0.1,0.75*0.6*0.54+0.75*0.4*0.9 +0.25*0.9, 0.75*0.6*-0.71+0.75*0.4*0.9*0.7071+0.75*0.4*0.1*-0.7071+0.25*0.9*0.7071+0.25*0.1*-0.7071)	 
	       \tdplotdrawpolytopearc[thick, ->]{2}{}{}

        \tdplotdefinepoints(0,0,0)(0.6*-0.33+0.4*0.1, 0.6*0.54+0.4*0.9, 0.6*-0.71+0.4*0.9*0.7071+0.4*0.1*-0.7071)(0.5*0.6*-0.33+0.5*0.4*0.1+0.5*0.1,0.5*0.6*0.54+0.5*0.4*0.9 +0.5*0.9, 0.5*0.6*-0.71+0.5*0.4*0.9*0.7071+0.5*0.4*0.1*-0.7071+0.5*0.9*0.7071+0.5*0.1*-0.7071)	 
	       \tdplotdrawpolytopearc[thick, ->]{2}{}{}
	       
        \tdplotdefinepoints(0,0,0)(0.1, 0.9, 0.9*0.7071+0.1*-0.7071)(0.5*0.1+0.5*0, 0.5*0.9+0.5*0.95*1+0.5*0.05*-1, 0.5*0.9*0.7071+0.5*0.1*-0.7071+0.5*0.95*0.7071+0.5*0.05*0.7071)	 
	       \tdplotdrawpolytopearc[thick, ->]{2}{}{}	  
	       	       	       	       	       	       
\end{tikzpicture}
  \caption{The spherical tetrahedron obtained by radial  projection of the terahedron $ABCD$, and the corresponding spherical flow.}
  \label{sphtetra}
\end{subfigure}
\newline
\vspace{6mm}
\begin{subfigure}{.45\textwidth}
  \centering
  \begin{tikzpicture}[scale=1.25]
\draw (0,0)-- (-2, 2*1.732);
\draw (0,0)-- (2, 2*1.732);
\draw (-1,1.732)-- (1,1.732);
\draw (-1,1.732)-- (0,2*1.732);
\draw (1,1.732)-- (0,2*1.732);
\draw (-2,2*1.732)-- (2,2*1.732);

\draw (0,2*1.732) node[anchor=south]   {$A$};
\draw (-1,1.732) node[anchor=east]   {$B$};
\draw (1,1.732) node[anchor=west]   {$C$};

\fill (0,2*1.732) circle (0.1) ;
\fill (-1,1.732)  circle (0.1) ;
\fill (1,1.732) circle (0.1) ;
\fill (2,2*1.732) circle (0.1) ;
\fill (0,0)  circle (0.1) ;
\fill (-2,2*1.732) circle (0.1) ;

\draw (0,1.5*1.732) node   {$1$};
\draw (0,0.5*1.732) node   {$4$};
\draw (-1,1.5*1.732) node   {$2$};
\draw (1,1.5*1.732) node   {$3$};
\end{tikzpicture}
  \caption{Net of the tetrahedron $ABCD$ and labelling of the faces.}
  \label{tetranet}
\end{subfigure}
\hfill
\begin{subfigure}{.45\textwidth}
  \centering
\vspace{8mm }
\begin{tikzpicture}[scale=0.6]

\draw (-6,0)-- (4, 0);
\draw (-5,1.732)-- (3, 1.732);
\draw (-5,-1.732)-- (3, -1.732);
\draw (-6,2*1.732)-- (4, 2*1.732);
\draw (-6,-2*1.732)-- (4, -2*1.732);
\draw (-5,3*1.732)--(3, 3*1.732);

\draw (2,-2*1.732)-- (-2, 2*1.732);
\draw (-2,-2*1.732)-- (2, 2*1.732);

\draw (0,2*1.732)-- (-4,-2*1.732);
\draw (0,-2*1.732)-- (4, 2*1.732);

\draw (2,-2*1.732)-- (4, 0);

\draw (0,2*1.732)-- (4,-2*1.732);

\draw (4, 0)-- (2, 2*1.732);

\draw (0,-2*1.732)-- (-4,2*1.732);

\draw (-2,-2*1.732)-- (-6,2*1.732);
\draw (-4,-2*1.732)-- (-6,0);

\draw (-6,2*-1.732)--(-2, 2*1.732);
\draw (-6,0)--(-4, 2*1.732);

\draw (0,2*1.732)--(-1, 3*1.732);
\draw (-2,2*1.732)--(-3, 3*1.732);
\draw (-4,2*1.732)--(-5, 3*1.732);
\draw (2,2*1.732)--(1, 3*1.732);
\draw (4,2*1.732)--(3, 3*1.732);

\draw (0,2*1.732)--(1, 3*1.732);
\draw (2,2*1.732)--(3, 3*1.732);
\draw (-2,2*1.732)--(-1, 3*1.732);
\draw (-4,2*1.732)--(-3, 3*1.732);
\draw (-6,2*1.732)--(-5, 3*1.732);

\draw[ultra thick] (-3,1.732)--(1, 1.732);
\draw[ultra thick] (-1,-1.732)--(-5, -1.732);
\draw[ultra thick] (1, 1.732)--(-1, -1.732);
\draw[ultra thick] (-3,1.732)--(-5,-1.732);

\fill (-2, 0) circle (0.15) ;
\fill (-1, 1.732) circle (0.15) ;
\fill (0, 0) circle (0.15) ;
\fill (-3, 1.732) circle (0.15) ;
\fill (1, 1.732) circle (0.15) ;
\fill (-1, -1.732) circle (0.15) ;

\draw (0,1.5*1.732) node   {$2$};
\draw (0,0.5*1.732) node   {$3$};
\draw (-1,1.5*1.732) node   {$1$};
\draw (1,1.5*1.732) node   {$4$};

\draw (0,-1.5*1.732) node   {$2$};
\draw (0,-0.5*1.732) node   {$3$};
\draw (-1,0.5*1.732) node   {$1$};
\draw (1,0.5*1.732) node   {$4$};

\draw (-2,0.5*1.732) node   {$2$};
\draw (2,-1.5*1.732) node   {$3$};
\draw (1,-0.5*1.732) node   {$1$};
\draw (-1,-0.5*1.732) node   {$4$};

\draw (-2,-0.5*1.732) node   {$2$};
\draw (-2,-1.5*1.732) node   {$3$};
\draw (-3,-0.5*1.732) node   {$1$};
\draw (-1,-1.5*1.732) node   {$4$};

\draw (-4,-1.5*1.732) node   {$2$};
\draw (-4,-0.5*1.732) node   {$3$};
\draw (1,-1.5*1.732) node   {$1$};
\draw (-3,0.5*1.732) node   {$4$};

\draw (-4,1.5*1.732) node   {$2$};
\draw (-4,0.5*1.732) node   {$3$};
\draw (-3,-1.5*1.732) node   {$1$};
\draw (-3,1.5*1.732) node   {$4$};

\draw (2,-0.5*1.732) node   {$2$};
\draw (-2,1.5*1.732) node   {$3$};
\draw (3,0.5*1.732) node   {$1$};
\draw (3,-0.5*1.732) node   {$4$};

\draw (2,0.5*1.732) node   {$2$};
\draw (2,1.5*1.732) node   {$3$};
\draw (3,1.5*1.732) node   {$1$};
\draw (3,-1.5*1.732) node   {$4$};

\draw (-5,1.5*1.732) node   {$1$};
\draw (-5,0.5*1.732) node   {$1$};
\draw (-5,-0.5*1.732) node   {$4$};
\draw (-5,-1.5*1.732) node   {$4$};
\draw (-5,2.5*1.732) node   {$4$};

\draw (-4,2.5*1.732) node   {$2$};
\draw (-3,2.5*1.732) node   {$1$};
\draw (-2,2.5*1.732) node   {$3$};
\draw (-1,2.5*1.732) node   {$4$};

\draw (0,2.5*1.732) node   {$2$};
\draw (1,2.5*1.732) node   {$1$};
\draw (2,2.5*1.732) node   {$3$};
\draw (3,2.5*1.732) node   {$4$};


\draw[thick, ->] (-2,0)--(1.8, 2*1.732+0.2);
\draw[thick, ->] (-2,0)--(0.66*-2+0.33*1.8, 0.33*2*1.732+0.33*0.2);
\draw[thick, ->] (-2,0)--(0.33*-2+0.66*1.8, 0.66*2*1.732+0.66*0.2);
\end{tikzpicture}
  \caption{The net of the tetrahedron can be used to tile the plane in a way that is compatible with the identification of the edges. By unfolding, the geodesic flow on the surface of the tetrahedron becomes a straight line in $\R^2$.}
  \label{tiling tetra}
\end{subfigure}
\newline
\vspace{6mm}
\begin{subfigure}{.45\textwidth}
  \centering
\begin{tikzpicture}[scale=0.5]

\fill (0,0) circle (0.15) ;
\fill (-4,2) circle (0.15) ;
\fill (0,-2) circle (0.15) ;
\fill (-2,0) circle (0.15) ;
\fill (0,2) circle (0.15) ;
\fill (-2,2) circle (0.15) ;

\draw (-6,-4)--(-6,6);
\draw (0,-4)--(0,6);
\draw (-4,-4)--(-4, 6);
\draw (-2,-4)--(-2, 6);
\draw (2,-4)--(2, 6);
\draw (4,-4)--(4, 6);
\draw (4,-4)--(4, 6);

\draw (-6,0)--(4,0);
\draw (-6,2)--(4,2);
\draw (-6,4)--(4,4);
\draw (-6,6)--(4,6);
\draw (-6,-2)--(4,-2);
\draw (-6,-4)--(4,-4);

\draw (4,-4)--(-6,6);
\draw (4,-2)--(-4,6);
\draw (4,0)--(-2,6);
\draw (4,2)--(0,6);
\draw (4,4)--(2,6);

\draw (2,-4)--(-6,4);
\draw (0,-4)--(-6,2);
\draw (-2,-4)--(-6,0);
\draw (-4,-4)--(-6,-2);

\draw[ultra thick] (-4,2)--(-4,-2);
\draw[ultra thick] (-4,2)--(0,2);
\draw[ultra thick] (0,2)--(0,-2);
\draw[ultra thick] (0,-2)--(-4,-2);

\draw[thick, ->] (-2,0)--(-0.2,4.2);
\draw[thick, ->] (-2,0)--(0.33*-0.2+0.66*-2,0.33*4.2);
\draw[thick, ->] (-2,0)--(0.66*-0.2+0.33*-2,0.66*4.2);

\draw (-1.25,0.5) node   {$1$};
\draw (0.75,0.5) node   {$4$};
\draw (2.75,0.5) node   {$1$};
\draw (-3.25,0.5) node   {$4$};
\draw (-5.25,0.5) node   {$1$};

\draw (-1.25,-3.5) node   {$1$};
\draw (0.75,-3.5) node   {$4$};
\draw (2.75,-3.5) node   {$1$};
\draw (-3.25,-3.5) node   {$4$};
\draw (-5.25,-3.5) node   {$1$};

\draw (-1.25,4.5) node   {$1$};
\draw (0.75,4.5) node   {$4$};
\draw (2.75,4.5) node   {$1$};
\draw (-3.25,4.5) node   {$4$};
\draw (-5.25,4.5) node   {$1$};

\draw (-1.25,-1.5) node   {$2$};
\draw (0.75,-1.5) node   {$3$};
\draw (2.75,-1.5) node   {$2$};
\draw (-3.25,-1.5) node   {$3$};
\draw (-5.25,-1.5) node   {$2$};

\draw (-1.25,2.5) node   {$2$};
\draw (0.75,2.5) node   {$3$};
\draw (2.75,2.5) node   {$2$};
\draw (-3.25,2.5) node   {$3$};
\draw (-5.25,2.5) node   {$2$};

\draw (-4.75,1.5) node   {$3$};
\draw (-2.75,1.5) node   {$2$};
\draw (-0.75,1.5) node   {$3$};
\draw (1.25,1.5) node   {$2$};
\draw (3.25,1.5) node   {$3$};

\draw (-4.75,-2.5) node   {$3$};
\draw (-2.75,-2.5) node   {$2$};
\draw (-0.75,-2.5) node   {$3$};
\draw (1.25,-2.5) node   {$2$};
\draw (3.25,-2.5) node   {$3$};

\draw (-4.75,-0.5) node   {$4$};
\draw (-2.75,-0.5) node   {$1$};
\draw (-0.75,-0.5) node   {$4$};
\draw (1.25,-0.5) node   {$1$};
\draw (3.25,-0.5) node   {$4$};

\draw (-4.75,5.5) node   {$3$};
\draw (-2.75,5.5) node   {$2$};
\draw (-0.75,5.5) node   {$3$};
\draw (1.25,5.5) node   {$2$};
\draw (3.25,5.5) node   {$3$};

\draw (-4.75,3.5) node   {$4$};
\draw (-2.75,3.5) node   {$1$};
\draw (-0.75,3.5) node   {$4$};
\draw (1.25,3.5) node   {$1$};
\draw (3.25,3.5) node   {$4$};

\end{tikzpicture}
  \caption{Upon applying a linear transformation, the previous tiling is bi--Lipschitz to the standard lattice.}
  \label{biliptiling}
\end{subfigure}
\hfill
\begin{subfigure}{.45\textwidth}
  \centering
\begin{tikzpicture}[scale=1]

\draw (-1.25,0.5) node{$4$};
\draw (-0.75,1.5) node{$2$};
\draw (1.25,1.5) node{$3$};
\draw (0.75,0.5) node{$1$};
\draw (1.25,-0.5) node{$4$};
\draw (-0.75,-0.5) node{$1$};
\draw (-1.25,-1.5) node{$3$};
\draw (0.75,-1.5) node{$2$};

\draw[ultra thick] (-2,-2)--(-2,2);
\draw[ultra thick] (-2,2)--(2,2);
\draw[ultra thick] (2,2)--(2,-2);
\draw[ultra thick] (2,-2)--(-2,-2);

\draw[thick, ->] (0, 0)--(0.857, 2);
\draw[thick, ->] (0, 0)--(0.5*0.857, 0.5*2);
\draw[thick, ->] (0.857, -2)--(1.8, 0.2);
\draw[thick, ->] (0.857, -2)--(0.5*1.8+0.5*0.857, 0.5*0.2+0.5*-2);
\draw[dotted] (0.857, -2)-- (0.857, 2);

\draw (-2,2)--(2,-2);
\draw(-2,0)--(0,-2);
\draw(0,2)--(2,0);
\draw(-2,0)--(2,0);
\draw(0,-2)--(0,2);

\fill (0,0) circle (0.1) ;
\fill (2,2) circle (0.1) ;
\fill (2,-2) circle (0.1) ;
\fill (2,0) circle (0.1) ;
\fill (0,2) circle (0.1) ;
\fill (-2,2) circle (0.1) ;

\end{tikzpicture}
  \caption{By periodicity, the flow on the surface of the tetrahedron $ABCD$ reduces to a toral flow.}
  \label{reductoralflow}
\end{subfigure}
\vspace{-8mm}
\caption{Study of a linear flow on the surface of a tetrahedron}
\label{flowtetratheory}
\end{figure}

\begin{proof}[Proof of Theorem~\ref{thmdistrtetra}]
In view of the equivalent formulation of bad approximability given by condition~\eqref{transference}, the flow~\eqref{geodesictetra} is well--defined under the assumption that the vector $\bm{\alpha}$ is badly approximable, since the ratio $\alpha_1/\alpha_2$ can then not be rational.\\

The density and separation conditions~\eqref{reldens} and~\eqref{unifdiscr} of the spherical sequence $\left(\bm{u}_k\right)_{k\ge 1}$ defined in~\eqref{defj_ku_k} will now be established in four steps.\\

\paragraph{\emph{Step 1:  the radial projection map~\eqref{radialproj} restricted to $\partial\mathcal{T}$ is bi--Lipschitz.}} To see that this indeed holds,  let $\bm{x}=\rho(\bm{u})\bm{u}\in\partial\mathcal{T}$ and $\bm{y}=\rho(\bm{v})\bm{v}\in\partial\mathcal{T}$, where $\bm{u}, \bm{v}\in\Sph^2$ and $\rho(\bm{u}), \rho(\bm{v})>0$. Then, 
\begin{equation}\label{lipradial}
\left\|\bm{x}-\bm{y}\right\|_2 \;\underset{\eqref{elem3}}{\asymp} \; \left|\rho(\bm{v})-\rho(\bm{u})\right|+\sqrt{\rho(\bm{v})\rho(\bm{u})}\cdot\dsph(\bm{u},\bm{v})\;\asymp\; \dsph(\bm{u},\bm{v}) \;=\; \dsph(\mathfrak{r}(\bm{x}),\mathfrak{r}(\bm{y})).
\end{equation}
The second relation above follows from two observations~: on the one hand, since the origin is an interior point of the terahedron $\mathcal{T}$, there exists $c>0$ such that $c\le \rho(\bm{v}), \rho(\bm{u})\le 1$. On the other, the map $\rho~: \Sph^2\rightarrow \R$ which to $\bm{u}\in\Sph^2$ associates the positive real number $\rho(\bm{u})>0$ such that $\rho(\bm{u})\bm{u}\in\partial\mathcal{T}$ is Lipschitz. This can be inferred from a direct calculation in the particular case under consideration, or else be seen as a general property of convex bodies containing the origin in their interior, as stated in~\cite[Lemma 3.1]{radialconvex}.

This step implies that it is enough to verify the  density and separation conditions~\eqref{reldens} and~\eqref{unifdiscr}  for the \emph{tetrahedral} sequence $\left(\bm{t_{\alpha}}\left(j_k\right)\right)_{k\ge 1}$.\\

\paragraph{\emph{Step 2:  the sequence $\left(j_k\right)_{k\ge 1}$ is well--defined provided that the parameter $\gamma>0$ is small enough.}} Clearly, given any $k\ge 1$, $$\sigma_2\left(\mathfrak{D}_k\right)\;\ll\; \gamma^2,$$ where the implicit constant can be taken as absolute. Therefore, for $\gamma>0$ small enough, $\sigma_2\left(\mathfrak{D}_k\right)<\sigma_2\left(\Sph^2\right)$. Since $\mathfrak{D}_k$ is a finite union of spherical caps, the existence of $j_k$ follows from the density of the sequence $\left(\left(\mathfrak{r}\circ\mathfrak{t}_{\bm{\alpha}}\right)(p)\right)_{p>j_{k-1}}$. Since the radial projection map $\mathfrak{r}$ is continuous, this, in turn, is implied by the density of the sequence $\left(\mathfrak{t}_{\bm{\alpha}}(p)\right)_{p>j_{k-1}}$ on $\partial\mathcal{T}$. 

To establish the latter claim, note that each point $\bm{a}\in\partial\mathcal{T}$ is represented by two opposite vectors in $\K'_2$, say $\pm\bm{x}(\bm{a})$. This is because each face of $\partial\mathcal{T}$ is represented by two triangles in $\K'_2$, one being obtain from the other by a reflection in the origin. Therefore, from the unfolding process, one obtains that for any given points $\bm{a}, \bm{b}\in\partial\mathcal{T}$, 
\begin{equation}\label{distgeotetra}
\textrm{dist}_{\partial\mathcal{T}}\left(\bm{a}, \bm{b}\right)\;\asymp\; 2\min\left\{\left\|\frac{\bm{x}(\bm{a})-\bm{x}(\bm{b})}{2}\right\|_{\Z^2},\, \left\|\frac{\bm{x}(\bm{a})+\bm{x}(\bm{b})}{2}\right\|_{\Z^2}\right\}.
\end{equation}
Here, the left--hand side is shorthand notation for the geodesic length on $\partial\mathcal{T}$ between the points  $\bm{a}$ and $ \bm{b}$, the right--hand side stands for the minimum of the toral distances in $\K'_2$ between $\bm{x}(\bm{a})$ and the points  $\pm\bm{x}(\bm{b})$, and the implicit constant depends only on the linear invertible (hence bi--Lipschitz) map induced by the matrix $\mathfrak{M}$. This relation shows  in particular that the ``folding back'' process mapping a point in $\K'_2$ to a point on $\partial\mathcal{T}$  can only decrease geodesic distances. Therefore,  the density of the tetrahedral sequence $\left(\mathfrak{t}_{\bm{\alpha}}(p)\right)_{p>j_{k-1}}$ is a consequence of the density of  the sequence $\left(\tilde{\mathfrak{s}}_{\bm{\alpha}}(p)\right)_{p>j_{k-1}}$ in $\K'_2$ identified with the $2\times 2$ torus. 

The density of this toral sequence is  easy to establish~: since the numbers $1, \alpha_1$ and $\alpha_2$ are linearly independent over the rationals (this readily follows from the equivalent formulation of bad approximability given in~\eqref{transference}), Weyl's criterion implies that the sequence  $\left(\tilde{\mathfrak{s}}_{\bm{\alpha}}(p)\right)_{p>j_{k-1}}$ is equidistributed in $\K'_2$, whence the claim.\\

\paragraph{\emph{Step 3:  the spherical sequence $\left(\bm{u}_k\right)_{k\ge 1}$ satisfies the separation condition~\eqref{unifdiscr}.}} The previous step implies in particular that the sequence $\left(\bm{u}_k\right)_{k\ge 1}$ is well--defined provided that $\gamma>0$ is small enough.  Let then $k, m\ge 1$ be integers such that $m<k^{2/3}$. From the definition of this sequence in~\eqref{defj_ku_k}, it is immediate that $$\min_{1\le m < k^{2/3}}\; \dsph\left(\bm{u}_{k-m}, \bm{u}_{k}\right)\;>\; \frac{\gamma}{\sqrt[3]{k}}\cdotp$$ This same definition also implies that $$\min_{1\le m < k^{2/3}}\; \dsph\left(\bm{u}_{k+m}, \bm{u}_{k}\right)\;>\; \frac{\gamma}{\sqrt[3]{k+m}}\;>\; \frac{\gamma}{\sqrt[3]{k+k^{2/3}}},$$ which shows that the separation condition~\eqref{unifdiscr} indeed holds.\\

\paragraph{\emph{Step 4:  the spherical sequence $\left(\bm{u}_k\right)_{k\ge 1}$ satisfies the density condition~\eqref{reldens}.}} We first note that the toral sequence $\left(\tilde{\mathfrak{s}}_{\bm{\alpha}}(p)\right)_{p\ge 1}$ satisfies the following density condition~: there exists a constant $K>0$ such that for all $\bm{x}\in\K'_2$ and all $k\ge 2$, one can find an integer $p\ge 1$ such that $p\le Kk^{2/3}$ and 
\begin{equation}\label{auxildenss}
\left\|\frac{\tilde{\mathfrak{s}}_{\bm{\alpha}}\left(j_{k-1}+p\right)-\bm{x}}{2}\right\|_{\Z^2}\;\le\; \frac{1}{\sqrt[3]{k}}\cdotp
\end{equation}
This is indeed an immediate consequence of Proposition~\ref{propdensbad}, which implies the existence of such $K>0$ with the property that the sequence $\left(\left(\tilde{\mathfrak{s}}_{\bm{\alpha}}\left(j_{k-1}+p\right)+\bm{1}\right)/2\right)_{1\le p\le Kk^{2/3}}$ is $k^{-1/3}$---dense in $\K_2$ identified with the unit torus. 

In particular, with the notation of Step 2, if $\bm{a}\in\partial\mathcal{T}$ is such that $\bm{x}=\pm\bm{x}\left(\bm{a}\right)$ (\footnote{This just means that $\bm{x}\in\left\{ -\bm{x}\left(\bm{a}\right), \bm{x}\left(\bm{a}\right)\right\}$.}), it follows from the fact that $\tilde{\mathfrak{s}}_{\bm{\alpha}}(p)=\pm\bm{x}\left(\mathfrak{t}_{\bm{a}}(p)\right)$ for all $p\ge 1$, from~\eqref{distgeotetra} and from~\eqref{auxildenss} that 
\begin{equation}\label{toraltetradens}
\textrm{dist}_{\partial\mathcal{T}}\left(\mathfrak{t}_{\bm{\alpha}}\left(j_{k-1}+p\right), \bm{a}\right)\;\ll\; \frac{1}{\sqrt[3]{k}}\qquad \textrm{for some} \qquad p\in\llbracket 1, Kk^{2/3}\rrbracket .
\end{equation}

Let then $m\ge 0$ denote the integer such that 
\begin{equation}\label{domainer}
j_{k-1+m}< j_{k-1}+p \le j_{k+m}.
\end{equation}  
Since $\left(j_k\right)_{k\ge1}$ is a strictly increasing sequence of integers, the first inequality in~\eqref{domainer} and the second relation in~\eqref{toraltetradens} imply that 
\begin{equation}\label{bound1}
0\;\le\; m\;\ll\; k^{2/3}.
\end{equation}

Set $\bm{v}=\mathfrak{r}^{-1}\left(\bm{a}\right)\in\Sph^2$ and assume first that $$j_{k-1}+p=j_{k+m}.$$ Then, it holds that 
\begin{equation}\label{reldensfirststep}
\dsphb\left(\bm{u}_{k+m}, \bm{v}\right)\;\ll\; \frac{1}{\sqrt[3]{k}}\cdotp
\end{equation} 
This indeed follows from relations~\eqref{lipradial} and~\eqref{toraltetradens} combined with this obvious observation~: for any $\bm{a}, \bm{b}\in\partial\mathcal{T}$, 
\begin{equation}\label{euclgoedtetra}
\left\|\bm{a}-\bm{b}\right\|_2\;\le\;\textrm{dist}_{\partial\mathcal{T}}\left(\bm{a}, \bm{b}\right).
\end{equation}

Assume now that $$j_{k-1+m}< j_{k-1}+p < j_{k+m}.$$ From the definition of the integer $j_{k+m}$ in~\eqref{defj_ku_k}, the unit vector $\left(\mathfrak{r}\circ\mathfrak{t}_{\bm{\alpha}}\right) (j_{k-1}+p)$ lies in $\mathfrak{D}_{k+m}$; that is, there exists $l\in\llbracket 1, (k+m)^{2/3}\rrbracket$ such that $$\dsphb \left(\bm{u}_{k+m-l}, \; (\mathfrak{r}\circ\mathfrak{t}_{\bm{\alpha}}) (j_{k-1}+p)\right)\;\le\;\frac{\gamma}{(k+m)^{1/3}}\;\le\; \frac{\gamma}{k^{1/3}},$$ where 
\begin{equation}\label{boundm-l}
\left|m-l\right|\;\ll\; k^{2/3}.
\end{equation}

Therefore, 
\begin{align}
\dsphb \left(\bm{u}_{k+m-l}, \bm{v}\right)\; &\le\;\dsphb \left(\bm{u}_{k+m-l}, (\mathfrak{r}\circ\mathfrak{t}_{\bm{\alpha}}) (j_{k-1}+p)\right)+\dsphb \left((\mathfrak{r}\circ\mathfrak{t}_{\bm{\alpha}}) (j_{k-1}+p), \bm{v}\right)\nonumber\\
&\ll\; \frac{1}{\sqrt[3]{k}}\cdotp \label{reldensbistetra}
\end{align}
Here, the last relation relies again on~\eqref{lipradial}, \eqref{toraltetradens} and~\eqref{euclgoedtetra}.

Inequalities~\eqref{bound1} and~\eqref{reldensfirststep} on the one hand and~\eqref{boundm-l} and~\eqref{reldensbistetra} on the other thus establish that the density condition~\eqref{reldens} holds for the spherical sequence $\left(\bm{u}_k\right)_{k\ge 1}$. This completes Step 4 and the proof of Theorem~\ref{thmdistrtetra}.
\end{proof}

\section{Some Open Problems}\label{oppb}

In this section, we collect some open questions emerging from the theories developed in this paper.\\

\paragraph{\textbf{Distribution of Spherical Flows obtained from Regular Polyhedra in $\R^3$}} The analysis of the spherical sequence constructed from a tetrahedral flow in Section~\ref{sec3} relied on a fondamental property of the tetrahedron~: its net can be arranged so as to tile the plane in a way that is compatible with the identification of the edges. This enabled us to reduce the distribution properties of the corresponding discrete flow (that is, its density, its separation and its equidistribution) to that of a toral flow.

It seems natural to extend this study to similar spherical sequences constructed from flows on the surfaces of the other Platonic solids (cube, octahedron, dodecahedron and icosahedron). The problem is then to determine Diophantine conditions on the direction of the flow so that the resulting discrete sequence should be equidistributed, dense and/or well--separated on the surface of the solid (with effective bounds on its discrepancy and dispersion, which are natural measures of equidistribution and density, respectively --- see~\cite[\S~1.1.3]{drmtich} for further details). 

At this level of generality, the situation is nevertheless more difficult to deal with than in the case of the tetrahedron, as the net of a Platonic solid does not necessary tile the plane in a way that is compatible with the identification of the edges. This is already the case for the cube, and a consequence of this phenomenon is illustrated in Figure~\ref{singularitycube}~: vertices are singularities for linear flows in the sense that two initially close flows can eventually follow two very different paths when they are split by a vertex. We expect that the distribution properties of a discrete flow on the surface of a cube will require the consideration of  Diophantine properties which do not reduce to bad approximability alone.

\begin{figure}[h!]

\centering
\begin{tikzpicture}[scale=2,tdplot_main_coords]
\draw[thick] (2,0,0)--(2,2,0);
\draw[thick] (2,2,0)--(0,2,0);
\draw[thick] (0,0,2)--(0,2,2);
\draw[thick] (0,2,0)--(0,2,2);
\draw[thick] (2,0,2)--(2,2,2);
\draw[thick] (2,2,2)--(2,2,0);
\draw[thick] (2,2,2)--(0,2,2);
\draw[thick] (2,0,2)--(2,0,0);
\draw[thick] (2,0,2)--(0,0,2);

\draw (2,0.95,1.05)--(2,1.9,2);
\draw[->] (2,0,0.1)--(2,0.95,1.05);

\draw (2,1.9,2)--(1.9,2,2);

\draw (0.95,2,1.05)--(0,2,0.1);
\draw[->] (1.9,2,2)--(0.95,2,1.05);

\draw (2,1.05,0.95)--(2,2,1.9);
\draw[->] (2,0.1,0)--(2,1.05,0.95);

\draw (1.95,2,1.95)--(1.9,2,2);
\draw[->] (2,2,1.9)--(1.95,2,1.95);

\draw (0.95,1.05,2)--(0,0.1,2);
\draw[->] (1.9,2,2)--(0.95,1.05,2);

\end{tikzpicture}
\caption{Unstability of a linear flow on the surface of a cube~: two lines translated by a small amount from each other can determine two drastically different paths when they are split by a vertex of the cube.}
\label{singularitycube}
\end{figure}
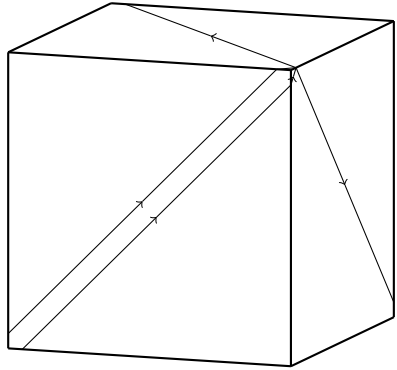

It should be pointed out that the recent paper~\cite{plato} provides a nice study of some of the topological properties of the surfaces of the Platonic solids.\\

\paragraph{\textbf{Analogous Spherical Flows in Higher Dimensions}} A natural question extending the previous problem is the study of the distribution of linear flows on the $(n-1)$--dimensional surface of a regular convex polytope in any dimension $n\ge 3$. In this respect, recall that the tetrahedron (that is, a 3--simplex), the cube and the octahedron  are the only three--dimensional convex regular solids which admit generalisations to any dimensions (in the form of the $n$--dimensional simplex,  hypercube and  hyperoctahedron, respectively). 

In this case also, the study of the distribution of linear flows on the surfaces of simplices in dimension $n\ge 4$ will require arguments different from those presented in Section~\ref{sec3}. Indeed, it can be shown that whenever $n\ge 4$, the net of an $n$--dimensional simplex does not tile the $(n-1)$--dimensional space in a way that is compatible with the identification of the edges.\\

\paragraph{\textbf{Spiral Delone Sets admitting an Asymptotic Density}} A measure of the degree of regularity of a sequence of points $\mathcal{S}=\left(\bm{s}_k\right)_{k\ge 1}$ in $\R^n$ is its asymptotic density, whenever it exists. Recall its definition~: $\mathcal{S}$ is said to have asymptotic density $r>0$ if for any bounded measurable set $X\subset\R^n$ whose boundary has zero Lebesgue measure, it holds that $$\frac{\#\left\{k\ge 1\;:\; \bm{s}_k\in RX \right\}}{R^n}\;\underset{R\rightarrow\infty}{\longrightarrow}\; r\cdot \lambda_n(X).$$

It follows from an elementary adaptation of the argument presented in~\cite[Proposition~1]{marklof} that the spiral set defined in~\eqref{sk} has asymptotic density $r=V_n^{-1}$, where $V_n$ denotes the volume of the $n$--dimensional unit ball, if, and only if, the corresponding spherical sequence $\left(\bm{u}_k\right)_{k \ge 1}$ is uniformly distributed in the following sense~: for any $\bm{w}\in\Sph^d$ and any $\rho>0$, $$\frac{\#\left\{1\le k\le N\;:\; \bm{u}_k\in \mathcal{C}_d\left(\bm{w}, \rho\right) \right\}}{N}\;\underset{N\rightarrow\infty}{\longrightarrow}\; \frac{\sigma_d\left(\mathcal{C}_d\left(\bm{w}, \rho\right) \right)}{\sigma_d\left(\Sph^d\right)}\cdotp$$
Uniform distribution is here defined in terms of the spherical \emph{measure} $\sigma_d$, and is a property of regularity rather different from the density and separation conditions introduced in Propositions~\ref{cnsreldens} and~\ref{cnsunifdiscr}. The latter two conditions are indeed defined in terms of the geodesic  \emph{distance} $\dsph$. 

It is not clear to the authors whether a spiral Delone set with asymptotic density $r=V_n^{-1}$ can exists in any dimension $n\ge 3$.\\


\end{document}